\documentclass[10pt]{article}
\usepackage{amsthm}
\usepackage{amsmath}
\usepackage{amssymb}
\usepackage{makeidx}

\linespread{1}
\setlength{\textwidth}{17.1cm}
\setlength{\textheight}{21.2cm}
\setlength{\oddsidemargin}{0cm}
\setlength{\evensidemargin}{0cm}

\theoremstyle{definition}
\newtheorem{definition}{Definition}[section]

\newtheorem{example}[definition]{Example}

\newtheorem{remark}[definition]{Remark}

\theoremstyle{plain}
\newtheorem{theorem}[definition]{Theorem}
\newtheorem{lemma}[definition]{Lemma}
\newtheorem{proposition}[definition]{Proposition}
\newtheorem{corollary}[definition]{Corollary}

\newcommand{\lcm}{{\rm lcm}}

\numberwithin{equation}{section}

\def\N{{\mathbb N}}

\def\P{{\mathbb P}}
\def\I{{\mathbb I}}

\begin{document}
\title{Cancelling Congruences of Lattices, While\\ Keeping Their Numbers of Filters and Ideals}
\author{Claudia MURE\c SAN\\
{\small University of Cagliari}\\ 
{\small c.muresan@yahoo.com}\\ \\ 
{\em Dedicated to Professor George Georgescu}}
\date{\today }
\maketitle

\begin{abstract} In this paper, we study the congruences, prime filters and prime ideals of horizontal sums of bounded lattices, then, through a construction based on horizontal sums and without enforcing the Continuum Hypothesis, we are modifying an example from \cite{eucard} into a solution to the problem we have proposed in the same article: finding a lattice with the cardinalities of the sets of filters, ideals and congruences pairwise distinct.

{\em Keywords}:  congruence; (prime) filter; (prime) ideal; (horizontal, ordinal) sum; (congruence--regular, simple) lattice. {\em MSC $2010$}: primary: 06B10; secondary: 06C15, 03E02, 03E10.\end{abstract}

\section{Introduction}
\label{introduction}

In \cite{eucard}, we have proposed the following problem: finding lattices with the cardinalities of the sets of congruences, filters and ideals pairwise distinct, or disproving their existence. In this paper, by using horizontal sums, we are modifying an example from \cite{eucard} of a lattice with the set of the filters countable and the set of the ideals uncountable into a simple lattice with the same numbers of filters and ideals. To cancel the congruences of this lattice, we are using a construction inspired by the method of constructing simple orthomodular lattices through horizontal sums from the proof of \cite[Proposition $5.11$]{bruhar}. Our method involves the use of multiple horizontal sums to turn arbitrary bounded lattices into simple bounded lattices.

We are also studying the effect of the basic horizontal sum construction on congruences and prime filters and ideals of bounded lattices, then apply it to a lattice with the set of the filters countable and the set of the ideals uncountable which can be turned into a simple lattice through a single horizontal sum. Note that, while many of the results on cardinalities from \cite{eucard} only hold under the Generalized Continuum Hypothesis, all results obtained in the present paper are valid without enforcing the Continuum Hypothesis.

\section{Preliminaries}
\label{preliminaries}

Throughout this paper, whenever there is no danger of confusion, we designate algebras by their underlying sets. $\N $ will denote the set of the natural numbers, $\N ^*=\N \setminus \{0\}$ and $\P $ will be the set of the prime natural numbers. For any sets $M$ and $N$, we denote by $M\amalg N$ the disjoint union of $M$ and $N$, by ${\cal P}(M)$ the set of the subsets of $M$ and by $|M|$ the cardinality of $M$. Also, for any cardinality $\kappa $, we denote by ${\cal P}_{\kappa }(M)=\{S\in {\cal P}(M)\ |\ |S|=\kappa \}$, ${\cal P}_{<\kappa }(M)=\{S\in {\cal P}(M)\ |\ |S|<\kappa \}$ and ${\cal P}_{\leq \kappa }(M)=\{S\in {\cal P}(M)\ |\ |S|\leq \kappa \}$; note that, if $0<\kappa \leq |M|$, then $|{\cal P}_{\kappa }(M)|=|M|^{\kappa }$, so $\displaystyle |M|^{\kappa -1}\leq |{\cal P}_{<\kappa }(M)|=\sum _{0\leq \iota <\kappa }|M|^{\iota }\leq \kappa \cdot |M|^{\kappa }$ and $|M|^{\kappa }=|{\cal P}_{\kappa }(M)|\leq |{\cal P}_{\leq \kappa }(M)|=|{\cal P}_{\kappa }(M)|+|{\cal P}_{<\kappa }(M)|\leq (\kappa +1)\cdot |M|^{\kappa }$, hence, if $|M|$ is infinite and $0<\kappa \leq |M|$, then $|{\cal P}_{\kappa }(M)|=|{\cal P}_{<\kappa }(M)|=|{\cal P}_{\leq \kappa }(M)|=|M|^{\kappa }$.

For any non--empty set $M$, $({\rm Eq}(M),\vee ,\cap ,\subseteq ,\Delta _{M},\nabla _M)$ and $({\rm Part}(M),\vee ,\wedge ,\leq ,\{\{x\}\ |\ x\in M\},\{M\})$ will be the bounded lattices of the equivalences and the partitions of $M$, respectively, and $eq:{\rm Part}(M)\rightarrow {\rm Eq}(M)$ shall be the canonical lattice isomorphism. If $n\in \N ^*$ and $\pi =\{M_1,\ldots ,M_n\}\in {\rm Part}(M)$, then the equivalence $eq(\{M_1,\ldots ,M_n\})$ will be denoted, simply, by $eq(M_1,\ldots ,M_n)$.

Let $L$ be a lattice. Then $\prec $ will denote the cover relation in $L$. For any $U\subseteq L$ and any $a,b\in L$, $[U)_L$ and $(U]_L$ shall be the filter, respectively the ideal of $L$ generated by $U$, and we use the common notations $[a)_L=[\{a\})_L$, $(a]_L=(\{a\}]_L$ and $[a,b]_L=[a)_L\cap (b]_L$. If the index $L$ is omitted, then the interval $[a,b]$ is considered in the lattice $\N $ with the natural order.

${\rm Con}(L)$, ${\rm Filt}(L)$, ${\rm PFilt}(L)$, ${\rm Id}(L)$ and ${\rm PId}(L)$ shall be the lattices of the congruences, filters, principal filters, ideals and principal ideals of $L$, respectively. Recall that the {\em prime congruences} of $L$ are the prime elements of the lattice ${\rm Con}(L)$, so all maximal congruences of $L$ are prime congruences. We denote by ${\rm Max}(L)$, ${\rm Spec}(L)$, ${\rm Spec}_{\rm Filt}(L)$ and ${\rm Spec}_{\rm Id}(L)$ the sets of the maximal congruences, prime congruences, prime filters and prime ideals of $L$, respectively. Recall that each class of a congruence of $L$ is a convex sublattice of $L$, thus it is the intersection of a filter and an ideal of $L$. If $L$ is a bounded lattice, then we denote by ${\rm Con}_{01}(L)$ the set of the congruences of $L$ whose classes of $0$ and $1$ are singletons: ${\rm Con}_{01}(L)=\{\theta \in {\rm Con}(L)\ |\ 0/\theta =\{0\},1/\theta =\{1\}\}$.

For any $n\in \N ^*$, ${\cal L}_n$ shall be the $n$--element chain. We use the common notations $M_3$ for the diamond and $N_5$ for the pentagon. For any lattices $K$ and $L$, the notation $K\cong L$ will specify the fact that $K$ and $L$ are isomorphic. We abbreviate by {\em DCC} the descending chain condition.

Recall that the {\em ordinal sum} of a lattice $(L,\leq ^L,1^L)$ with last element and a lattice $(M,\leq ^M,0^M)$ with first element is the lattice denoted by $L\oplus M$ obtained by identifying $1^L=0^M$ and glueing $L$ and $M$ at this single common point. More precisely, we let $\varepsilon =eq(\{\{1^L,0^M\}\}\cup \{\{x\}\ |\ x\in (L\setminus \{1^L\})\amalg (M\setminus \{1^M\})\})\in {\rm Eq}(L\amalg M)$ and consider the set $L\oplus M=(L\amalg M)/\varepsilon $. Since $\varepsilon \cap L^2=\Delta _L\in {\rm Con}(L)$ and $\varepsilon \cap M^2=\Delta _M\in {\rm Con}(M)$, we may identify $L$ with $L/\varepsilon \cong L$ and $M$ with $M/\varepsilon \cong M$ by identifying $x$ with $x/\varepsilon $ for all $x\in L\amalg M$. Now we define the lattice order $\leq ^{L\oplus M}=\leq ^L\cup \leq ^M$ on $L\oplus M$. Clearly, the ordinal sum of bounded lattices is associative.

\begin{center}\begin{tabular}{cc}
\begin{picture}(40,50)(0,0)
\put(20,20){\circle*{3}}
\put(32,17){$1^L=0^M$}
\put(0,0){\line(1,0){40}}
\put(0,40){\line(1,0){40}}
\put(20,0){\oval(40,40)[t]}
\put(20,40){\oval(40,40)[b]}
\put(17,7){$L$}
\put(15,27){$M$}
\put(6,45){$L\oplus M:$}
\end{picture}
&\hspace*{100pt}
\begin{picture}(40,50)(0,0)
\put(20,0){\circle*{3}}
\put(20,40){\circle*{3}}
\put(20,20){\oval(60,40)}
\put(20,20){\oval(30,40)}
\put(-5,17){$A$}
\put(37,17){$B$}
\put(4,43){$1^A=1^B$}
\put(4,-10){$0^A=0^B$}
\put(-40,45){$A\boxplus B:$}
\end{picture}\end{tabular}\end{center}\vspace*{2pt}

Recall that the {\em horizontal sum} of two non--trivial bounded lattices $(A,\leq ^A,0^A,1^A)$ and $(B,\leq ^B,0^B,1^B)$ is the non--trivial bounded lattice denoted $A\boxplus B$ and obtained by glueing $A$ and $B$ at their first elements and at their last elements. We can generalize this construction to an arbitrary non--empty family $((A_i,\leq ^{A_i},$\linebreak $0^{A_i},1^{A_i}))_{i\in I}$ of non--trivial bounded lattices. For the precise definition, we let $\displaystyle \xi =eq(\{\{0^{A_i}\ |\ i\in I\},\{1^{A_i}\ |\ i\in I\}\}\cup \{\{x\}\ |\ x\in \amalg _{i\in I}(A_i\setminus \{0^{A_i},1^{A_i}\})\})\in {\rm Eq}(\amalg _{i\in I}A_i)$ and consider the set $\displaystyle \boxplus _{i\in I}A_i=(\amalg _{i\in I}A_i)/\xi $. Since, for every $i\in I$, $\xi \cap A_i^2=\Delta _{A_i}\in {\rm Con}(A_i)$, we may identify each $A_i$ with $A_i/\xi \cong A_i$ by identifying $x$ with $x/\xi $ for all $\displaystyle x\in \amalg _{i\in I}A_i$. Now we define the lattice order $\displaystyle \leq ^{\boxplus _{i\in I}A_i}=\bigcup _{i\in I}\leq ^{A_i}$ on $\boxplus _{i\in I}A_i$; the lattice $(\boxplus _{i\in I}A_i,\leq ^{\boxplus _{i\in I}A_i})$ has the first element $0=0^{\boxplus _{i\in I}A_i}=0^{A_j}$ and the last element $1=1^{\boxplus _{i\in I}A_i}=1^{A_j}$ for every $j\in I$. If $\alpha _i\in {\rm Eq}(A_i)\setminus \{\nabla _{A_i}\}$ for all $i\in I$, then we denote by $\displaystyle \boxplus _{i\in I}\alpha _i=eq(\bigcup _{i\in I}(A_i/\alpha _i\setminus \{0^{A_i}/\alpha _i,1^{A_i}/\alpha _i\})\cup \{\bigcup _{i\in I}0^{A_i}/\alpha _i,\bigcup _{i\in I}1^{A_i}/\alpha _i\})\in {\rm Eq}(\boxplus _{i\in I}A_i)\setminus \{\nabla _{\boxplus _{i\in I}A_i}\}$; so $\boxplus _{i\in I}\alpha _i$ is the equivalence on $\boxplus _{i\in I}A_i$ whose classes are: $\displaystyle x/(\boxplus _{i\in I}\alpha _i)=\begin{cases}x/\alpha _i, & x\in A_i\setminus \{0,1\}\mbox{ for some }i\in I,\\ \bigcup _{i\in I}x/\alpha _i, & x\in \{0,1\}.\end{cases}$ Note that ${\cal L}_2\boxplus B=B$ and $\Delta _{{\cal L}_2}\boxplus \beta =\beta $ for any non--trivial bounded lattice $B$ and any $\beta \in {\rm Con}(B)\setminus \{\nabla _B\}$. Clearly, the horizontal sum of non--trivial bounded lattices is commutative and associative, and so is the operation $\boxplus $ on proper equivalences on the underlying sets of those lattices.

\section{Some Introductory Remarks}
\label{theproblem}

Let $L$ be a lattice. Then $|L|=|{\rm PFilt}(L)|=|{\rm PId}(L)|\leq |{\rm Filt}(L)|,|{\rm Id}(L)|\leq |{\cal P}(L)|=2^{|L|}$, while $|{\rm Con}(L)|\leq |{\rm Eq}(L)|=|{\rm Part}(L)|\leq |\{\pi \in {\cal P}({\cal P}(L))\ |\ |\pi |\leq |L|\}|=|{\cal P}_{\leq |L|}({\cal P}(L))|\leq (|L|+1)\cdot (2^{|L|})^{|L|}=(|L|+1)\cdot (2^{|L|\cdot |L|})$.

If all filters of $L$ are principal, then $|{\rm Filt}(L)|=|L|$, and the same holds for ideals, but the converses of these implications do not hold, as shown by a set of examples in \cite[Remarks $5.2$ and $5.3$]{eucard}. 

If $L$ is finite, then all its filters and ideals are principal, so $|{\rm Filt}(L)|=|{\rm Id}(L)|=|L|$, while, if $L$ is infinite, then, by the above, $|L|\leq |{\rm Filt}(L)|,|{\rm Id}(L)|\leq 2^{|L|}$ and $|{\rm Con}(L)|\leq 2^{|L|}$. Therefore a lattice $L$ with $|{\rm Con}(L)|$, $|{\rm Filt}(L)|$ and $|{\rm Id}(L)|$ pairwise distinct has to be infinite and, if we enforce the Generalized Continuum Hypothesis, then we must have $|{\rm Con}(L)|<|L|$ and $\{|{\rm Filt}(L)|,|{\rm Id}(L)|\}=\{|L|,2^{|L|}\}$, so $L$ has to have strictly less congruences than elements and either as many filters as elements and as many ideals as subsets or vice--versa.

Let $F$ be a filter of $L$. Then $F$ is principal iff it has a minimum, case in which $F=[\min (F))_L$. $L\setminus F$ is an ideal of $F$ iff $F$ is prime. The duals of these hold for ideals. The map $P\mapsto L\setminus P$ is a bijection between ${\rm Spec}_{\rm Filt}(L)$ and ${\rm Spec}_{\rm Id}(L)$ and, for any prime filter $P$ of $L$, $eq(P,L\setminus P)\in {\rm Max}(L)\subseteq {\rm Spec}(L)$, hence, for any non--empty family $(P_i)_{i\in I}$ of prime filters of $L$, if we denote by $\displaystyle \theta =\bigcap _{i\in I}eq(P_i,L\setminus P_i)$, then $\theta $ is a congruence of $L$ such that $L/\theta $ is a bounded lattice, with the filter $\displaystyle \bigcap _{i\in I}P_i$ as top element and the ideal $\displaystyle \bigcap _{i\in I}(L\setminus P_i)$ as bottom element. Therefore $L$ has at least as many congruences as intersections of prime filters and at least as many congruences as intersections of prime ideals. In the particular case when $L$ is distributive, $L$ has at least as many congruences as filters and at least as many congruences as ideals, so, if $L$ is an infinite distributive lattice, then $|L|\leq |{\rm Filt}(L)|,|{\rm Id}(L)|\leq |{\rm Con}(L)|\leq 2^{|L|}$, hence the cardinalities $|{\rm Filt}(L)|$, $|{\rm Id}(L)|$ and $|{\rm Con}(L)|$ can not be pairwise distinct under the Generalized Continuum Hypothesis.

Since ${\rm Con}(L)$ is a complete sublattice of ${\rm Eq}(L)$, for any non--empty family $(\pi _i)_{i\in I}\subseteq {\rm Part}(L)$, if $eq(\pi _i)\in {\rm Con}(L)$ for all $i\in I$, then $\displaystyle eq(\bigvee _{i\in I}\pi _i)=\bigvee _{i\in I}eq(\pi _i),eq(\bigwedge _{i\in I}\pi _i)=\bigcap _{i\in I}eq(\pi _i)\in {\rm Con}(L)$. If $S$ is a non--empty subset of $L$ and $\sigma \in {\rm Part}(L)$ such that $\sigma \subseteq \pi _i$ for all $i\in I$, then $\displaystyle \sigma \subseteq \bigvee _{i\in I}\pi _i$ and $\displaystyle \sigma \subseteq \bigwedge _{i\in I}\pi _i$; also, if $\nu ,\rho, \pi \in {\rm Part}(L)$ are such that $\nu \leq \rho \leq \pi $, $\sigma \subseteq \nu $ and $\sigma \subseteq \pi $, then $\sigma \subseteq \rho $. Therefore $\{\theta \in {\rm Con}(L)\ |\ \sigma \subseteq L/\theta \}$ is a complete convex sublattice of ${\rm Con}(L)$, so it is a bounded lattice. In particular, if $L$ is a bounded lattice, then ${\rm Con}_{01}(L)$ is a complete convex sublattice of ${\rm Con}(L)$ which obviously contains $\Delta _L$, hence ${\rm Con}_{01}(L)$ is a principal ideal of ${\rm Con}(L)$, generated by the largest congruence $\mu $ of $L$ with the classes of $0$ and $1$ singletons.

\section{Horizontal Sums Cancel Congruences, Prime Filters and Prime Ideals, While Leaving Filters and Ideals in Place}
\label{horizsums}

Throughout this section, $L$ shall be a non--trivial bounded lattice.

We will sometimes use the remarks in this paper without referencing them.

\begin{remark} For any proper filter $P$ of $L$, the following are equivalent:\begin{enumerate}
\item\label{prime1} $P$ is a prime filter of $L$;
\item\label{prime2} $L\setminus P$ is an ideal of $L$;
\item\label{prime3} $L\setminus P$ is a prime ideal of $L$;
\item\label{prime4} $eq(P,L\setminus P)$ is a congruence of $L$;
\item\label{prime5} $eq(P,L\setminus P)$ is a maximal congruence of $L$.\end{enumerate}

Indeed, (\ref{prime4}) and (\ref{prime5}) are clearly equivalent, and so are (\ref{prime1}), (\ref{prime2}) and (\ref{prime3}). It is straightforward that (\ref{prime1}) and (\ref{prime3}) imply (\ref{prime4}). If $eq(P,L\setminus P)\in {\rm Con}(L)$, then $L\setminus P=0/eq(P,L\setminus P)\in {\rm Id}(L)$, so (\ref{prime4}) implies (\ref{prime5}).

Note, also, from the above, that, for any congruence $\theta $ of $L$, since $0/\theta \in {\rm Filt}(L)$ and $1/\theta \in {\rm Id}(L)$, we have: $|L/\theta |=2$ iff $\theta =eq(0/\theta ,1/\theta )\neq \nabla _L$ iff $\theta \neq \nabla _L$ and $0/\theta \cup 1/\theta =L$, which implies that $0/\theta \in {\rm Spec}_{\rm Filt}(L)$ and $1/\theta \in {\rm Spec}_{\rm Id}(L)$.\label{prime}\end{remark}

\begin{lemma}\begin{enumerate}
\item\label{cgvsirred0} $0$ is meet--irreducible in $L$ iff $L\setminus \{0\}\in {\rm Filt}(L)$ iff $L\setminus \{0\}\in {\rm Spec}_{\rm Filt}(L)$ iff $\{0\}\in {\rm Spec}_{\rm Id}(L)$ iff $eq(\{0\},L\setminus \{0\})\in {\rm Con}(L)$ iff $eq(\{0\},L\setminus \{0\})\in {\rm Max}(L)$, and dually for $1$.
\item\label{cgvsirred2} If $|L|>2$, then: $0$ is meet--irreducible and $1$ is join--irreducible in $L$ iff $L\setminus \{0,1\}$ is a convex sublattice of $L$ iff $eq(\{0\},L\setminus \{0,1\},\{1\})\in {\rm Con}(L)$.\end{enumerate}\label{cgvsirred}\end{lemma}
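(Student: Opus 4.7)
The plan is to reduce both parts to the filter--ideal--congruence dictionary of Remark~\ref{prime}. For part~(\ref{cgvsirred0}), note that `$0$ is meet--irreducible' is by definition the assertion that $L\setminus\{0\}$ is closed under binary meets. Since $L\setminus\{0\}$ is automatically up--closed in $L$ and $L$ is non--trivial, this is equivalent to $L\setminus\{0\}$ being a proper filter of $L$. I then apply Remark~\ref{prime} with $P=L\setminus\{0\}$, whose complement $\{0\}$ is trivially an ideal, so that being a filter here already forces $P$ to be prime; the equivalences (ii)--(v) of that remark then deliver the remaining four statements of (\ref{cgvsirred0}) in one stroke.

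For part~(\ref{cgvsirred2}) I would run a cycle of three implications. Suppose first that $0$ is meet--irreducible and $1$ is join--irreducible. For $a,b\in L\setminus\{0,1\}$, meet--irreducibility of $0$ gives $a\wedge b\neq 0$, while $a\wedge b\leq a<1$ gives $a\wedge b\neq 1$, so $a\wedge b\in L\setminus\{0,1\}$; the dual argument handles joins. Convexity of $L\setminus\{0,1\}$ in $L$ is immediate, and non--emptiness comes from $|L|>2$. For the converse, if $a\wedge b=0$ with $a,b\neq 0$, then neither $a$ nor $b$ can equal $1$ (else the meet would be the other factor), so $a,b\in L\setminus\{0,1\}$, and closure under $\wedge$ forces $a\wedge b\in L\setminus\{0,1\}$, contradicting $a\wedge b=0$; dually one recovers join--irreducibility of $1$.

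It remains to identify `$L\setminus\{0,1\}$ is a convex sublattice of $L$' with `$eq(\{0\},L\setminus\{0,1\},\{1\})\in{\rm Con}(L)$'. One direction is immediate from the fact, recalled in Section~\ref{preliminaries}, that every congruence class is a convex sublattice of $L$. For the other, the singleton classes $\{0\}$ and $\{1\}$ automatically respect substitution via the absorption identities $0\wedge x=0$ and $1\vee x=1$; the only remaining case is when both operands lie in the middle class $L\setminus\{0,1\}$, where closure of that class under $\wedge$ and $\vee$ keeps the result in the same class. I do not anticipate any genuine obstacle here---the only care needed is a clean case split for the substitution rule.
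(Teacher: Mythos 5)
Your proof is correct. For part (\ref{cgvsirred0}) you follow essentially the paper's own route: identify meet--irreducibility of $0$ with $L\setminus\{0\}$ being a (proper) filter and then let Remark \ref{prime} do the rest; the only cosmetic difference is that the paper verifies the primality condition for joins directly, while you obtain primality from the fact that the complement $\{0\}$ is an ideal, which is just another face of the same remark. For part (\ref{cgvsirred2}) your route genuinely differs. The paper closes the cycle as: irreducibility of $0$ and $1$ $\Rightarrow$ $eq(\{0\},L\setminus\{0\}),eq(L\setminus\{1\},\{1\})\in{\rm Con}(L)$ by part (\ref{cgvsirred0}), hence their intersection $eq(\{0\},L\setminus\{0,1\},\{1\})$ is a congruence, hence its class $L\setminus\{0,1\}$ is a convex sublattice, hence (closure under $\wedge$ and $\vee$) the irreducibility conditions return. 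You instead prove irreducibility $\Leftrightarrow$ sublattice by direct element chasing and then verify sublattice $\Rightarrow$ congruence by a hands--on compatibility check (the converse via convexity of congruence classes, as in the paper). Both work; the paper's version is shorter because it recycles part (\ref{cgvsirred0}) together with the closure of ${\rm Con}(L)$ under intersection, whereas yours is more self--contained and makes (\ref{cgvsirred2}) independent of (\ref{cgvsirred0}), at the cost of the explicit substitution argument. One small imprecision there: in the case where the fixed operand is $1$ (for meets) or $0$ (for joins), the identities you need are $1\wedge x=x$ and $0\vee x=x$ rather than the bound laws $0\wedge x=0$, $1\vee x=1$ you cite; the conclusion is unaffected, since the results $x$ and $y$ stay in the middle class, but state the case split cleanly.
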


\begin{proof} (\ref{cgvsirred0}) $L\setminus \{0\}$ is closed w.r.t. upper bounds and, for all $x,y\in L$, if $x\vee y\in L\setminus \{0\}$, then $x\in L\setminus \{0\}$ or $y\in L\setminus \{0\}$. Clearly, $L\setminus \{0\}$ is closed w.r.t. meets iff $0$ is meet--irreducible in $L$. Hence the first two equivalences hold. The rest of the equivalences follow from Remark \ref{prime}.

\noindent (\ref{cgvsirred2}) By (\ref{cgvsirred0}), if $0$ is meet--irreducible and $1$ is join--irreducible in $L$, then $eq(\{0\},L\setminus \{0\}),eq(L\setminus \{1\},\{1\})\in {\rm Con}(L)$, thus $eq(\{0\},L\setminus \{0,1\},\{1\})=eq(\{\{0\},L\setminus \{0\}\}\wedge \{L\setminus \{1\},\{1\}\})=eq(\{0\},L\setminus \{0\})\cap eq(L\setminus \{1\},\{1\})\in {\rm Con}(L)$, which in turn implies that $L\setminus \{0,1\}$ is a convex sublattice of $L$. On the other hand, if $L\setminus \{0,1\}$ is a sublattice of $L$, then it is closed w.r.t. meets, so $0$ is meet--irreducible in $L$, and w.r.t. joins, so $1$ is join--irreducible in $L$.\end{proof}

Note that, if $|L|>2$, then $eq(\{0\},L\setminus \{0,1\},\{1\})$ is not a prime congruence of $L$, because, according to Lemma \ref{cgvsirred}, $eq(\{0\},L\setminus \{0,1\},\{1\})=eq(\{0\},L\setminus \{0\})\cap eq(L\setminus \{1\},\{1\})$ is a congruence of $L$ exactly when $eq(\{0\},L\setminus \{0\})\supsetneq eq(\{0\},L\setminus \{0,1\},\{1\})$ and $eq(L\setminus \{1\},\{1\})\supsetneq eq(\{0\},L\setminus \{0,1\},\{1\})$ are congruences of $L$. Let us also notice that, if $|L|>2$, then each member of ${\rm Con}_{01}(L)$ has at least three distinct classes.

Throughout the rest of this section, $A$ and $B$ shall be non--trivial bounded lattices.

\begin{remark} If there exist $a\in A\setminus \{0,1\}$ and $b\in B\setminus \{0,1\}$, then $[\{a,b\})=A\boxplus B=(\{a,b\}]$, hence, regardless of the cardinalities of $A$ and $B$:\begin{itemize}
\item ${\rm Filt}(A\boxplus B)=({\rm Filt}(A)\setminus \{A\})\cup ({\rm Filt}(B)\setminus \{B\})\cup \{A\boxplus B\}=(({\rm Filt}(A)\cup {\rm Filt}(B))\setminus \{A,B\})\cup \{A\boxplus B\}$, and similarly for ideals, therefore, since ${\rm Filt}(A)\cap {\rm Filt}(B)=\{\{1\}\}$ and dually for ideals, we 
have:
\item $|{\rm Filt}(A\boxplus B)|=|{\rm Filt}(A)|+|{\rm Filt}(B)|-2$ and $|{\rm Id}(A\boxplus B)|=|{\rm Id}(A)|+|{\rm Id}(B)|-2$.\end{itemize}\label{filtidhsum}\end{remark}

\begin{proposition} If $|A|>2$ and $|B|>2$, then ${\rm Spec}_{\rm Filt}(A\boxplus B)\subseteq \{A\setminus \{0\},B\setminus \{0\}\}$, ${\rm Spec}_{\rm Id}(A\boxplus B)\subseteq \{A\setminus \{1\},B\setminus \{1\}\}$ and the following are equivalent:\begin{itemize}
\item $0$ is meet--irreducible in $A$ and $1$ is join--irreducible in $B$;
\item $A\setminus \{0\}\in {\rm Spec}_{\rm Filt}(A\boxplus B)$;
\item $B\setminus \{1\}\in {\rm Spec}_{\rm Id}(A\boxplus B)$;
\item $eq(A\setminus \{0\},B\setminus \{1\})\in {\rm Con}(A\boxplus B)$;
\item $eq(A\setminus \{0\},B\setminus \{1\})\in {\rm Max}(A\boxplus B)$.\end{itemize}\label{spechsum}\end{proposition}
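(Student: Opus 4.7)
The plan is to first pin down the spectrum inclusions by a direct case analysis, then invoke Remark~\ref{prime} to collapse the last four listed conditions into one, and finally identify the condition ``$A\setminus\{0\}\in{\rm Spec}_{\rm Filt}(A\boxplus B)$'' with meet-irreducibility of $0$ in $A$ together with join-irreducibility of $1$ in $B$.

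For the inclusion ${\rm Spec}_{\rm Filt}(A\boxplus B)\subseteq\{A\setminus\{0\},B\setminus\{0\}\}$, I would fix any $a\in A\setminus\{0,1\}$ and $b\in B\setminus\{0,1\}$, which exist because $|A|,|B|>2$, and use that $a\wedge b=0$, $a\vee b=1$ in $A\boxplus B$. Given a proper prime filter $P$ of $A\boxplus B$, the equality $1=a\vee b\in P$ forces $a\in P$ or $b\in P$ by primality, while $0=a\wedge b\notin P$ rules out both lying in $P$. If $a\in P$ for some $a\in A\setminus\{0,1\}$, then, applying the same dichotomy to $a$ against any $b'\in B\setminus\{0,1\}$, all such $b'$ lie outside $P$; and applying it to any $a'\in A\setminus\{0,1\}$ against a fixed $b\in B\setminus\{0,1\}$ that is now known to avoid $P$, all such $a'$ must lie in $P$. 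Together with $1\in P$ and $0\notin P$ this yields $P=A\setminus\{0\}$; the symmetric case yields $P=B\setminus\{0\}$. The prime-ideal statement is handled by the dual argument.

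For the equivalences, Remark~\ref{prime} applied to $P=A\setminus\{0\}$ in the lattice $A\boxplus B$ already gives (with $(A\boxplus B)\setminus(A\setminus\{0\})=B\setminus\{1\}$) that ``$A\setminus\{0\}\in{\rm Spec}_{\rm Filt}(A\boxplus B)$'' is equivalent to ``$B\setminus\{1\}\in{\rm Spec}_{\rm Id}(A\boxplus B)$'', to ``$eq(A\setminus\{0\},B\setminus\{1\})\in{\rm Con}(A\boxplus B)$'', and to ``$eq(A\setminus\{0\},B\setminus\{1\})\in{\rm Max}(A\boxplus B)$'', so only the link with the first bullet remains. That $A\setminus\{0\}$ is a filter of $A\boxplus B$ reduces, since it is trivially upward closed in $A\boxplus B$, to $A\setminus\{0\}$ being closed under binary meets in $A$, which by Lemma~\ref{cgvsirred}(\ref{cgvsirred0}) is exactly meet-irreducibility of $0$ in $A$. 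For primality, the cases where $x,y$ lie on the same side of the horizontal sum, or one of them is $0$ or $1$, are immediate; the only non-trivial case is $x,y\in B\setminus\{0,1\}$ with $x\vee y\in A\setminus\{0\}$, which forces $x\vee y\in\{0,1\}\cap (A\setminus\{0\})=\{1\}$, and the existence of such $x,y$ is precisely the failure of join-irreducibility of $1$ in $B$. So primality of the filter $A\setminus\{0\}$ in $A\boxplus B$ is equivalent to $1$ being join-irreducible in $B$, closing the circle.

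The main obstacle is keeping the mixed cases in the primality analysis straight and making sure that the two-element intersection $A\cap B=\{0,1\}$ in $A\boxplus B$ is correctly exploited; once that bookkeeping is set up, each direction is a short check, and the rest of the proof is an application of the already-proved Remark~\ref{prime} and Lemma~\ref{cgvsirred}.
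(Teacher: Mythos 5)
Your proposal is correct and follows essentially the same route as the paper: split the primality of a candidate filter of $A\boxplus B$ into the within--$A$, within--$B$ and mixed cases (the mixed case, via $a\vee b=1$ and $a\wedge b=0$, forcing the filter to be $A\setminus\{0\}$ or $B\setminus\{0\}$), translate the remaining conditions by Lemma~\ref{cgvsirred}(\ref{cgvsirred0}), and obtain the four-way equivalence from Remark~\ref{prime} together with $(A\boxplus B)\setminus(A\setminus\{0\})=B\setminus\{1\}$. The only difference is organizational: the paper first locates proper filters of $A\boxplus B$ inside ${\rm Filt}(A)\cup{\rm Filt}(B)$ via Remark~\ref{filtidhsum}, whereas you derive the inclusion of the prime spectrum directly, which amounts to the same computation.
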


\begin{proof} Let $P\in {\rm Filt}(A\boxplus B)\setminus \{A\boxplus B\}=({\rm Filt}(A)\setminus \{A\})\cup ({\rm Filt}(B)\setminus \{B\})$ by Remark \ref{filtidhsum}. Assume, for instance, that $P\in {\rm Filt}(A)\setminus \{A\}$. Then $P\in {\rm Spec}_{\rm Filt}(A\boxplus B)$ iff all the following hold:\begin{itemize}
\item $P\in {\rm Spec}_{\rm Filt}(A)$;
\item for all $x,y\in B$, $x\vee y\in P\cap B=\{1\}$ implies $x\in P\cap B=\{1\}$ or $y\in P\cap B=\{1\}$, which is equivalent to $1$ being join--irreducible in $B$, which in turn is equivalent to $B\setminus \{1\}\in {\rm Spec}_{\rm Id}(B)$ by Lemma \ref{cgvsirred}, (\ref{cgvsirred0});
\item for all $a\in A\setminus \{0,1\}$ and all $b\in B\setminus \{0,1\}$, if $a\vee b\in P$, then $a\in P$ or $b\in P$, so that $a\in P$ since $b\in B\setminus A$, which is equivalent to $A\setminus \{0,1\}\subseteq P$ and thus to $A\setminus \{0\}\subseteq P$ since $P$ is a filter of $A$, which in turn is equivalent to $P=A\setminus \{0\}$ since $P$ is a proper filter.\end{itemize}

Therefore $P\in {\rm Spec}_{\rm Filt}(A\boxplus B)$ iff $1$ is join--irreducible in $B$ and $P=A\setminus \{0\}\in {\rm Spec}_{\rm Filt}(A)$ iff $P=A\setminus \{0\}$ and $0$ is meet--irreducible in $A$ and $1$ is join--irreducible in $B$, again by Lemma \ref{cgvsirred}, (\ref{cgvsirred0}). Dually, a proper ideal $Q$ of $A\boxplus B$ is prime iff $Q=B\setminus \{1\}$ and $0$ is meet--irreducible in $A$ and $1$ is join--irreducible in $B$. From the above, the fact that $B\setminus \{1\}=(A\boxplus B)\setminus (A\setminus \{0\})$ and Remark \ref{prime}, we obtain the equivalences in the enunciation. 

Similarly, if $P\in {\rm Filt}(B)\setminus \{B\}$, then $P=B\setminus \{0\}$, hence ${\rm Spec}_{\rm Filt}(A\boxplus B)\subseteq \{A\setminus \{0\},B\setminus \{0\}\}$. Dually, ${\rm Spec}_{\rm Id}(A\boxplus B)\subseteq \{A\setminus \{1\},B\setminus \{1\}\}$.\end{proof}

\begin{remark} For any $\theta \in {\rm Con}(A\boxplus B)$, we have: $\theta \cap A^2\in {\rm Con}(A)$, $\theta \cap B^2\in {\rm Con}(B)$ and: $\theta =\nabla _{A\boxplus B}$ iff $(0,1)\in \theta $ iff $(0,1)\in \theta \cap A^2$ iff $(0,1)\in \theta \cap B^2$ iff $\theta \cap A^2=\nabla _A$ iff $\theta \cap B^2=\nabla _B$, and, if $\theta \neq \nabla _{A\boxplus B}$, then $\theta =(\theta \cap A^2)\boxplus (\theta \cap B^2)$.\label{cg01}\end{remark}

\begin{lemma} ${\rm Con}_{01}(A\boxplus B)=\{\alpha \boxplus \beta \ |\ \alpha \in {\rm Con}_{01}(A),\beta \in {\rm Con}_{01}(B)\}\cong {\rm Con}_{01}(A)\times {\rm Con}_{01}(B)$.\label{cg01hsum}\end{lemma}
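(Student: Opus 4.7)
My plan is to establish the isomorphism by producing explicit mutually inverse maps between $\mathrm{Con}_{01}(A\boxplus B)$ and $\mathrm{Con}_{01}(A)\times \mathrm{Con}_{01}(B)$, and then checking that both directions are order--preserving. The edge case $A\cong {\cal L}_2$ (or $B\cong {\cal L}_2$) is handled trivially, since then $A\boxplus B\cong B$ (resp.\ $A$) and $\mathrm{Con}_{01}({\cal L}_2)=\{\Delta _{{\cal L}_2}\}$, so assume in what follows that $|A|,|B|>2$.

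For the surjectivity direction, I would take an arbitrary $\theta \in \mathrm{Con}_{01}(A\boxplus B)$ and put $\alpha =\theta \cap A^2$, $\beta =\theta \cap B^2$. Since $0/\theta =\{0\}$ and $1/\theta =\{1\}$, we have $(0,1)\notin \theta $, so $\theta \neq \nabla _{A\boxplus B}$. Remark \ref{cg01} then yields $\alpha \in \mathrm{Con}(A)$, $\beta \in \mathrm{Con}(B)$ and $\theta =\alpha \boxplus \beta $. Furthermore $0/\alpha =0/\theta \cap A=\{0\}$, $1/\alpha =\{1\}$, and dually for $\beta $, so $\alpha \in \mathrm{Con}_{01}(A)$ and $\beta \in \mathrm{Con}_{01}(B)$.

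The converse direction is where the real work lies: given $\alpha \in \mathrm{Con}_{01}(A)$ and $\beta \in \mathrm{Con}_{01}(B)$, I must verify that the equivalence $\alpha \boxplus \beta $ (defined in the preliminaries) is actually a congruence of $A\boxplus B$. The singleton $0$-- and $1$--class hypothesis makes the two merged classes reduce to $\{0\}$ and $\{1\}$, so every non--trivial class of $\alpha \boxplus \beta $ lies entirely inside $A\setminus \{0,1\}$ or entirely inside $B\setminus \{0,1\}$. The compatibility check therefore splits into cases on where two $(\alpha \boxplus \beta )$--related pairs $(x,x')$ and $(y,y')$ live: when both pairs lie in $A$, the congruence property follows from $\alpha \in \mathrm{Con}(A)$; when both lie in $B$, from $\beta \in \mathrm{Con}(B)$; and the only genuinely new situation is the mixed case, say $x,x'\in A\setminus \{0,1\}$ and $y,y'\in B\setminus \{0,1\}$, but in $A\boxplus B$ we then have $x\vee y=x'\vee y'=1$ and $x\wedge y=x'\wedge y'=0$, so compatibility is immediate. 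Residual sub--cases where one element equals $0$ or $1$ are trivial because those classes are singletons, which pins the partner to the same element. This case analysis is the main (but quite routine) obstacle.

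Finally, the two assignments $\theta \mapsto (\theta \cap A^2,\theta \cap B^2)$ and $(\alpha ,\beta )\mapsto \alpha \boxplus \beta $ are mutually inverse by Remark \ref{cg01} and the fact that the formulas for the classes of $\alpha \boxplus \beta $ immediately recover $\alpha $ and $\beta $ upon intersection with $A^2$ and $B^2$. Both maps are clearly monotone: intersection with $A^2$ and with $B^2$ preserve inclusion, and inspection of the class description of $\alpha \boxplus \beta $ shows that $\alpha \subseteq \alpha '$ and $\beta \subseteq \beta '$ imply $\alpha \boxplus \beta \subseteq \alpha '\boxplus \beta '$. A monotone bijection between lattices whose inverse is also monotone is a lattice isomorphism, giving $\mathrm{Con}_{01}(A\boxplus B)\cong \mathrm{Con}_{01}(A)\times \mathrm{Con}_{01}(B)$ and the set--theoretic description in the statement.
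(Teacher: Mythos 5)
Your proposal is correct and takes essentially the same route as the paper: both use Remark \ref{cg01} to decompose any $\theta \in {\rm Con}_{01}(A\boxplus B)$ as $(\theta \cap A^2)\boxplus (\theta \cap B^2)$ and to see that $(\alpha ,\beta )\mapsto \alpha \boxplus \beta $ is an order--preserving bijection with order--preserving inverse. The only difference is that you spell out the case analysis showing $\alpha \boxplus \beta $ is indeed a congruence when $\alpha \in {\rm Con}_{01}(A)$ and $\beta \in {\rm Con}_{01}(B)$, a verification the paper leaves implicit; this is a welcome extra detail, not a divergence in method.
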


\begin{proof} By Remark \ref{cg01}, the fact that ${\rm Con}_{01}(A)\subseteq {\rm Con}(A)\setminus \{\nabla _A\}$ and the same for $B$ and $A\boxplus B$, and the definition of the horizontal sum of proper congruences, according to which $0/(\alpha \boxplus \beta )=0/\alpha \cup 0/\beta $ and $1/(\alpha \boxplus \beta )=1/\alpha \cup 1/\beta $ for all $\alpha \in {\rm Con}(A)\setminus \{\nabla _A\}$ and all $\beta \in {\rm Con}(B)\setminus \{\nabla _B\}$, we get that ${\rm Con}_{01}(A\boxplus B)=\{\alpha \boxplus \beta \ |\ \alpha \in {\rm Con}_{01}(A),\beta \in {\rm Con}_{01}(B)\}$, hence, the map $(\alpha ,\beta )\mapsto \alpha \boxplus \beta $ from ${\rm Con}_{01}(A)\times {\rm Con}_{01}(B)$ to ${\rm Con}_{01}(A\boxplus B)$ is surjective. By Remark \ref{cg01}, $(\alpha \boxplus \beta )\cap A^2=\alpha $ and $(\alpha \boxplus \beta )\cap B^2=\beta $ for all $\alpha \in {\rm Con}(A)\setminus \{\nabla _A\}$ and all $\beta \in {\rm Con}(B)\setminus \{\nabla _B\}$, so this map is also injective, and it is clearly order--preserving, therefore it is a lattice isomorphism.\end{proof}

\begin{theorem} If $|A|>2$ and $|B|>2$, then:\begin{enumerate}
\item\label{cghsum1} ${\rm Con}(A\boxplus B)={\rm Con}_{01}(A\boxplus B)\cup \{\nabla _{A\boxplus B}\}\cong ({\rm Con}_{01}(A)\times {\rm Con}_{01}(B))\oplus {\cal L}_2$ iff $A\boxplus B$ has no two--class congruences iff ${\rm Spec}_{\rm Filt}(A\boxplus B)=\emptyset $ iff ${\rm Spec}_{\rm Id}(A\boxplus B)=\emptyset $ iff the following conditions are fulfilled:\begin{itemize}
\item $0$ is meet--reducible in $A$ or $1$ is join--reducible in $B$, and
\item $0$ is meet--reducible in $B$ or $1$ is join--reducible in $A$;\end{itemize}

\item\label{cghsum2} ${\rm Con}(A\boxplus B)={\rm Con}_{01}(A\boxplus B)\cup \{eq(A\setminus \{0\},B\setminus \{1\}),\nabla _{A\boxplus B}\}\cong ({\rm Con}_{01}(A)\times {\rm Con}_{01}(B))\oplus {\cal L}_3$ iff $eq(A\setminus \{0\},B\setminus \{1\})$ is the unique two--class congruence of $A\boxplus B$ iff ${\rm Spec}_{\rm Filt}(A\boxplus B)=\{A\setminus \{0\}\}$ iff ${\rm Spec}_{\rm Id}(A\boxplus B)=\{B\setminus \{1\}\}$ iff the following conditions are fulfilled:\begin{itemize}
\item $0$ is meet--irreducible in $A$ and $1$ is join--irreducible in $B$, but
\item $0$ is meet--reducible in $B$ or $1$ is join--reducible in $A$;\end{itemize}

and dually for the case when $eq(A\setminus \{1\},B\setminus \{0\})$ is the unique two--class congruence of $A\boxplus B$;

\item\label{cghsum3} ${\rm Con}(A\boxplus B)={\rm Con}_{01}(A\boxplus B)\cup \{eq(A\setminus \{0\},B\setminus \{1\}),eq(A\setminus \{1\},B\setminus \{0\}),\nabla _{A\boxplus B}\}\cong ({\rm Con}_{01}(A)\times {\rm Con}_{01}(B))\oplus {\cal L}_2^2$ iff $A\boxplus B$ has two two--class congruences iff ${\rm Spec}_{\rm Filt}(A\boxplus B)=\{A\setminus \{0\},B\setminus \{0\}\}$ iff ${\rm Spec}_{\rm Id}(A\boxplus B)=\{A\setminus \{1\},B\setminus \{1\}\}$ iff $0$ is meet--irreducible in $A$ and $B$ and $1$ is join--irreducible in $A$ and $B$.\end{enumerate}\label{cghsum}\end{theorem}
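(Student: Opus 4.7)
The plan is to first establish a structural claim: any congruence $\theta $ of $A\boxplus B$ that is not in ${\rm Con}_{01}(A\boxplus B)\cup \{\nabla _{A\boxplus B}\}$ must equal one of the two candidate two--class equivalences $\theta _1=eq(A\setminus \{0\},B\setminus \{1\})$ or $\theta _2=eq(A\setminus \{1\},B\setminus \{0\})$. Granted this, the three items correspond to the four possibilities for which of $\theta _1,\theta _2$ actually lies in ${\rm Con}(A\boxplus B)$: neither (part (\ref{cghsum1})), exactly one (the two symmetric subcases of part (\ref{cghsum2})), or both (part (\ref{cghsum3})). The iff chains inside each part then follow by combining the structural claim with Remark \ref{prime} (two--class congruences correspond bijectively, via $0/\theta $, to prime filters) together with Proposition \ref{spechsum} and its dual, which determine precisely when $A\setminus \{0\}$ or $B\setminus \{0\}$ is a prime filter of $A\boxplus B$, equivalently when $\theta _1$ or $\theta _2$ is a congruence.

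For the structural claim, let $\theta \in {\rm Con}(A\boxplus B)\setminus ({\rm Con}_{01}(A\boxplus B)\cup \{\nabla _{A\boxplus B}\})$. By Remark \ref{cg01} one has $\theta =(\theta \cap A^2)\boxplus (\theta \cap B^2)$, and the hypothesis $\theta \notin {\rm Con}_{01}(A\boxplus B)$ forces $0/\theta \neq \{0\}$ or $1/\theta \neq \{1\}$; I assume the first, the other case being dual. Pick $x\in (A\boxplus B)\setminus \{0\}$ with $(0,x)\in \theta $; since $\theta \neq \nabla _{A\boxplus B}$ forces $x\neq 1$, $x$ lies in exactly one of $A\setminus \{0,1\}$ or $B\setminus \{0,1\}$. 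Assume $x\in B\setminus \{0,1\}$ (the $A$--case is symmetric and produces $\theta _2$ in place of $\theta _1$). Now the key is horizontal--sum arithmetic: elements from different proper summands meet to $0$ and join to $1$. So for every $a\in A\setminus \{0,1\}$, $(a,1)=(a\vee 0,a\vee x)\in \theta $, and then for every $b\in B\setminus \{0,1\}$, $(0,b)=(a\wedge b,1\wedge b)\in \theta $. Hence $1/\theta \supseteq A\setminus \{0\}$ and $0/\theta \supseteq B\setminus \{1\}$, and $\theta \neq \nabla _{A\boxplus B}$ turns both inclusions into equalities, giving $\theta =\theta _1$; moreover, $\theta _1\in {\rm Con}(A\boxplus B)$ then forces, via Lemma \ref{cgvsirred}(\ref{cgvsirred0}), that $0$ is meet--irreducible in $A$ and $1$ is join--irreducible in $B$, matching Proposition \ref{spechsum}.

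With the structural claim in hand, Lemma \ref{cg01hsum} identifies ${\rm Con}_{01}(A\boxplus B)$ with ${\rm Con}_{01}(A)\times {\rm Con}_{01}(B)$; a short verification shows that every $\theta \in {\rm Con}_{01}(A\boxplus B)$ refines both $\theta _1$ and $\theta _2$ as partitions (any non--singleton $\theta $--class sits inside $A\setminus \{0,1\}$ or $B\setminus \{0,1\}$, hence inside the appropriate block of each $\theta _i$), while $\theta _1,\theta _2$ are mutually incomparable and strictly below $\nabla _{A\boxplus B}$. Therefore the congruences beyond ${\rm Con}_{01}(A\boxplus B)$ form, sitting on top of the latter, an ordinal summand isomorphic to ${\cal L}_2$, ${\cal L}_3$ or ${\cal L}_2^2$ according to whether zero, one or two of $\theta _1,\theta _2$ are congruences, yielding the three claimed isomorphisms. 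The main obstacle is the structural claim itself: its proof rests on carefully exploiting the horizontal--sum arithmetic to promote a single non--trivial pair $(0,x)\in \theta $ into forcing $\theta $ to coincide with one of the two extremal two--class equivalences, and on keeping track of the symmetries --- whether $x$ lies in $A$ or $B$, and whether the anomaly is in $0/\theta $ or $1/\theta $ --- so as to ensure exhaustiveness.
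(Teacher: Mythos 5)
Your proposal is correct and follows essentially the same route as the paper: the same horizontal--sum arithmetic argument promoting a pair $(0,x)\in \theta $ (or dually $(x,1)\in \theta $) with $x$ interior to one summand into the conclusion $\theta =eq(A\setminus \{0\},B\setminus \{1\})$ or $\theta =eq(A\setminus \{1\},B\setminus \{0\})$, followed by the same appeals to Lemma \ref{cg01hsum}, Remark \ref{prime} and Proposition \ref{spechsum} for the ${\rm Con}_{01}$ part, the spectra and the (ir)reducibility conditions, and the ordinal--sum shape of ${\rm Con}(A\boxplus B)$. The only difference is organizational (you case--split on where the witness element lies rather than on the classes of $\theta \cap A^2$ and $\theta \cap B^2$, and you spell out that every member of ${\rm Con}_{01}(A\boxplus B)$ lies below both two--class equivalences, which the paper leaves implicit), plus a harmless slip in saying the two--class congruences correspond to prime filters via $0/\theta $ rather than $1/\theta $.
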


\begin{proof} Of course, ${\rm Con}_{01}(A\boxplus B)\cup \{\nabla _{A\boxplus B}\}\subseteq {\rm Con}(A\boxplus B)$. Now let $\theta \in {\rm Con}(A\boxplus B)\setminus ({\rm Con}_{01}(A\boxplus B)\cup \{\nabla _{A\boxplus B}\})$, $\alpha =\theta \cap A^2\in {\rm Con}(A)\setminus \{\nabla _A\}$ and $\beta =\theta \cap B^2\in {\rm Con}(B)\setminus \{\nabla _B\}$. Then $\theta =\alpha \boxplus \beta $, so that $0/\theta =0/\alpha \cup 0/\beta $ and $1/\theta =1/\alpha \cup 1/\beta $. By the choice of $\theta $, we have $0/\theta \supsetneq \{0\}$ or $1/\theta \supsetneq \{1\}$, hence $0/\alpha \supsetneq \{0\}$ or $0/\beta \supsetneq \{0\}$ or $1/\alpha \supsetneq \{1\}$ or $1/\beta \supsetneq \{1\}$.

Assume, for instance, that $0/\alpha \supsetneq \{0\}$, so that there exists an $a\in A\setminus \{0\}$ with $(0,a)\in \alpha \subseteq \theta $. Since $\theta \neq \nabla _{A\boxplus B}$, we have $a\neq 1$. Let $b\in B\setminus \{0\}$ and $c\in A\setminus \{1\}$, arbitrary. Then $(b,1)=(0\vee b,a\vee b)\in \theta $, thus $(0,c)=(c\wedge b,c\wedge 1)\in \theta $. Hence $B\setminus \{0\}\subseteq 1/\theta $ and $A\setminus \{1\}\subseteq 0/\theta $, therefore $\nabla _{A\boxplus B}\supseteq \theta \supseteq eq(A\setminus \{1\},B\setminus \{0\})\in {\rm Max}(A\boxplus B)$ (see Remark \ref{prime}), hence $\theta =eq(A\setminus \{1\},B\setminus \{0\})$. Dually, if $1/\beta \supsetneq \{1\}$, then we also get $\theta =eq(A\setminus \{1\},B\setminus \{0\})$.

Similarly, $0/\beta \supsetneq \{0\}$ iff $1/\alpha \supsetneq \{1\}$ iff $\theta =eq(A\setminus \{0\},B\setminus \{1\})$.

Therefore ${\rm Con}(A\boxplus B)\subseteq {\rm Con}_{01}(A\boxplus B)\cup \{eq(A\setminus \{0\},B\setminus \{1\}),eq(A\setminus \{1\},B\setminus \{0\}),\nabla _{A\boxplus B}\}$, and we get the four cases in the enunciation, with the form of the prime spectra of filters and ideals of $L$ following from Remark \ref{prime}, the conditions on the meet--irreducibility of $0$ and the join--irreducibility of $1$ being inferred from Proposition \ref{spechsum} and the shape of the lattice ${\rm Con}(A\boxplus B)$ being given by Lemma \ref{cg01hsum} and the fact that ${\rm Con}_{01}(A\boxplus B)$ is a bounded lattice.\end{proof}

\begin{corollary} If $|A|>2$ and $|B|>2$, then $A\boxplus B$ is subdirectly irreducible iff one of the following conditions is satisfied:\begin{itemize}
\item ${\rm Con}_{01}(A)=\{\Delta _A\}$, ${\rm Con}_{01}(B)=\{\Delta _B\}$ and $A\boxplus B$ has at most one two--class congruence;
\item ${\rm Con}_{01}(A)=\{\Delta _A\}$ and ${\rm Con}_{01}(B)$ has a single atom, or vice--versa.\end{itemize}\end{corollary}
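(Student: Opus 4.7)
The plan is to reduce subdirect irreducibility of $A \boxplus B$ to a minimum--element question in ${\rm Con}(A \boxplus B)$: a non--trivial bounded lattice is subdirectly irreducible iff its congruence lattice has a minimum non--identity element, necessarily an atom. By Theorem \ref{cghsum}, ${\rm Con}(A \boxplus B) \cong P \oplus T$, where $P := {\rm Con}_{01}(A) \times {\rm Con}_{01}(B)$ and $T$ is one of the three small lattices ${\cal L}_2$, ${\cal L}_3$, ${\cal L}_2^2$ according as $A \boxplus B$ has $0$, $1$ or $2$ two--class congruences. So the task reduces to locating the minimum, if any, of ${\rm Con}(A \boxplus B) \setminus \{\Delta _{A \boxplus B}\}$.

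I would first record two elementary structural facts. (i) In an ordinal sum $X \oplus Y$ of bounded lattices with $|X| \geq 2$, every element of $Y \setminus \{0^Y\}$ lies strictly above every element of $X$, so the minimum of $(X \oplus Y) \setminus \{0^{X \oplus Y}\}$, if it exists, coincides with the minimum of $X \setminus \{0^X\}$; and when $|X| = 1$ one has $X \oplus Y \cong Y$. (ii) If $X$ and $Y$ are bounded lattices both of cardinality $> 1$, then $(X \times Y) \setminus \{(0^X,0^Y)\}$ has no minimum, because $(\alpha, 0^Y)$ and $(0^X, \beta)$ are incomparable non--bottom elements for any $\alpha \in X \setminus \{0^X\}$ and $\beta \in Y \setminus \{0^Y\}$; hence $X \times Y$ can have a minimum non--bottom element only when at most one factor is non--trivial, in which case $X \times Y$ is isomorphic to the other factor.

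Combining (i) and (ii), the corollary follows by splitting on whether $P$ is trivial. If $P$ is non--trivial, then by (i) the minimum non--$\Delta$ element of ${\rm Con}(A \boxplus B)$ exists iff that of $P$ does, and by (ii) this forces exactly one of ${\rm Con}_{01}(A)$, ${\rm Con}_{01}(B)$ to equal $\{\Delta _A\}$, respectively $\{\Delta _B\}$, with the other having a single atom as minimum non--$\Delta$ element --- the second bullet. If $P$ is trivial, meaning ${\rm Con}_{01}(A) = \{\Delta _A\}$ and ${\rm Con}_{01}(B) = \{\Delta _B\}$, then ${\rm Con}(A \boxplus B) \cong T$, and $T$ has a minimum non--bottom element iff $T \neq {\cal L}_2^2$, i.e.\ $A \boxplus B$ has at most one two--class congruence --- the first bullet. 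The only real content is the structural observations (i) and (ii); everything else is a direct read--off from Theorem \ref{cghsum}, and I expect the main (mild) care to be needed only in the case split that ensures these two bullets are mutually exclusive and exhaustive.
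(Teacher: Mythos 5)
Your proposal follows what is evidently the paper's intended route: the corollary is given there with no separate proof, as a direct read--off from Theorem \ref{cghsum} together with Lemma \ref{cg01hsum}, and that is exactly what you carry out. Your two structural facts are correct: in an ordinal sum $X\oplus Y$ with $|X|\geq 2$ every element of $Y\setminus\{0^Y\}$ lies above all of $X$, so the least non--bottom element of $X\oplus Y$ exists iff that of $X$ does; and a direct product with two non--trivial factors has incomparable non--bottom elements whose meet is the bottom, hence no least non--bottom element. The reduction of subdirect irreducibility to the existence of a monolith, and the case split on whether ${\rm Con}_{01}(A)\times {\rm Con}_{01}(B)$ is trivial, then reproduce the two bullets, and your bullets are indeed mutually exclusive and exhaustive.

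The one step that deserves a flag is the final identification in the second case. What your argument actually characterizes, when ${\rm Con}_{01}(A)=\{\Delta _A\}$ and ${\rm Con}_{01}(B)\neq \{\Delta _B\}$, is that ${\rm Con}_{01}(B)$ has a \emph{smallest} element distinct from $\Delta _B$, which is then automatically its unique atom. Equating this with the literal wording ``${\rm Con}_{01}(B)$ has a single atom'' is only an implication, not an equivalence: ${\rm Con}_{01}(B)$ is just a (distributive, algebraic) principal ideal of ${\rm Con}(B)$ and may have exactly one atom that fails to lie below every non--trivial member --- for instance ${\rm Con}_{01}(B)\cong {\cal L}_2\times C$ with $C$ a non--trivial atomless congruence lattice (such as that of a bounded dense chain), and by Lemma \ref{cg01hsum} such $B$ arise as horizontal sums, e.g.\ of $N_5$ with a suitable bounded chain. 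For such $B$ and, say, $A=M_3$, the second bullet holds literally while $A\boxplus B$ has no monolith, so the equivalence requires reading ``has a single atom'' as ``has a smallest non--trivial congruence''. This imprecision sits in the corollary's own wording (and likewise in Corollary \ref{3atleast}.(\ref{3atleast2})) rather than being introduced by your argument, but your proof should state explicitly that it establishes the ``smallest non--trivial congruence'' form, under which it is complete.
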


\begin{example} The following horizontal sums have the prime spectra of filters and ideals and the congruence lattices below:\vspace*{-17pt}

\begin{center}
\begin{tabular}{cccc}
\begin{picture}(80,80)(0,0)
\put(38,10){$0$}
\put(40,20){\circle*{3}}
\put(20,40){\circle*{3}}
\put(60,40){\circle*{3}}
\put(40,60){\circle*{3}}
\put(40,20){\line(-1,1){20}}
\put(40,20){\line(1,1){20}}
\put(40,60){\line(-1,-1){20}}
\put(40,60){\line(1,-1){20}}
\put(13,37){$a$}
\put(63,37){$b$}
\put(38,63){$1$}
\put(10,0){${\cal L}_2^2={\cal L}_3\boxplus {\cal L}_3$}
\end{picture}
& 
\begin{picture}(80,80)(0,0)
\put(38,10){$0$}
\put(40,20){\circle*{3}}
\put(20,40){\circle*{3}}
\put(40,40){\circle*{3}}
\put(60,40){\circle*{3}}
\put(40,60){\circle*{3}}
\put(40,20){\line(-1,1){20}}
\put(40,20){\line(1,1){20}}
\put(40,20){\line(0,1){40}}
\put(40,60){\line(-1,-1){20}}
\put(40,60){\line(1,-1){20}}
\put(13,37){$u$}
\put(42,37){$v$}
\put(63,37){$w$}
\put(38,63){$1$}
\put(1,0){$M_3={\cal L}_3\boxplus {\cal L}_3\boxplus {\cal L}_3$}
\end{picture}
&
\begin{picture}(80,80)(0,0)
\put(38,10){$0$}
\put(40,20){\circle*{3}}
\put(20,40){\circle*{3}}
\put(50,30){\circle*{3}}
\put(50,50){\circle*{3}}
\put(40,60){\circle*{3}}
\put(40,20){\line(-1,1){20}}
\put(40,20){\line(1,1){10}}
\put(50,30){\line(0,1){20}}
\put(40,60){\line(-1,-1){20}}
\put(40,60){\line(1,-1){10}}
\put(13,37){$x$}
\put(53,27){$y$}
\put(53,47){$z$}
\put(38,63){$1$}
\put(7,0){$N_5={\cal L}_3\boxplus {\cal L}_4$}
\end{picture}
&
\begin{picture}(80,80)(0,0)
\put(38,10){$0$}
\put(40,20){\circle*{3}}
\put(20,40){\circle*{3}}
\put(40,40){\circle*{3}}
\put(50,30){\circle*{3}}
\put(50,50){\circle*{3}}
\put(40,60){\circle*{3}}
\put(40,20){\line(-1,1){20}}
\put(40,20){\line(1,1){10}}
\put(50,30){\line(0,1){20}}
\put(40,60){\line(-1,-1){20}}
\put(40,60){\line(1,-1){10}}
\put(40,20){\line(0,1){20}}
\put(40,40){\line(1,1){10}}
\put(10,37){$m$}
\put(53,27){$n$}
\put(53,48){$p$}
\put(35,43){$q$}
\put(38,63){$1$}
\put(-7,0){$K={\cal L}_3\boxplus ({\cal L}_2^2\oplus {\cal L}_2)$}\end{picture}\\ 
\begin{picture}(80,85)(0,0)
\put(36,10){$\Delta _{{\cal L}_2^2}$}
\put(40,20){\circle*{3}}
\put(20,40){\circle*{3}}
\put(60,40){\circle*{3}}
\put(40,60){\circle*{3}}
\put(40,20){\line(-1,1){20}}
\put(40,20){\line(1,1){20}}
\put(40,60){\line(-1,-1){20}}
\put(40,60){\line(1,-1){20}}
\put(12,38){$\alpha $}
\put(62,36){$\beta $}
\put(36,63){$\nabla _{{\cal L}_2^2}$}
\put(22,-5){${\rm Con}({\cal L}_2^2)$}
\end{picture}
& 
\begin{picture}(80,85)(0,0)
\put(36,10){$\Delta _{M_3}$}
\put(40,20){\circle*{3}}
\put(40,40){\circle*{3}}
\put(40,20){\line(0,1){20}}
\put(36,43){$\nabla _{M_3}$}
\put(22,-5){${\rm Con}(M_3)$}\end{picture}
& 
\begin{picture}(80,85)(0,0)
\put(36,10){$\Delta _{N_5}$}
\put(40,20){\circle*{3}}
\put(40,40){\circle*{3}}
\put(25,55){\circle*{3}}
\put(55,55){\circle*{3}}
\put(40,70){\circle*{3}}
\put(40,20){\line(0,1){20}}
\put(40,40){\line(1,1){15}}
\put(40,40){\line(-1,1){15}}
\put(40,70){\line(1,-1){15}}
\put(40,70){\line(-1,-1){15}}
\put(19,53){$\xi $}
\put(56,54){$\chi $}
\put(42,36){$\zeta $}
\put(36,73){$\nabla _{N_5}$}
\put(22,-5){${\rm Con}(N_5)$}
\end{picture}
&
\begin{picture}(80,85)(0,0)
\put(36,10){$\Delta _K$}
\put(40,20){\circle*{3}}
\put(40,40){\circle*{3}}
\put(40,60){\circle*{3}}
\put(40,20){\line(0,1){40}}
\put(43,38){$\mu $}
\put(36,63){$\nabla _K$}
\put(24,-5){${\rm Con}(K)$}
\end{picture}\end{tabular}\end{center}

${\rm Spec}_{\rm Filt}({\cal L}_2^2)=\{\{a,1\},\{b,1\}\}$, ${\rm Spec}_{\rm Id}({\cal L}_2^2)=\{\{0,a\},\{0,b\}\}$, $\alpha =eq(\{0,a\},\{b,1\})$ and $\beta =eq(\{0,b\},\{a,1\})$.

${\rm Spec}_{\rm Filt}(M_3)=\emptyset ={\rm Spec}_{\rm Id}(M_3)$.

${\rm Spec}_{\rm Filt}(N_5)=\{\{x,1\},\{y,z,1\}\}$, ${\rm Spec}_{\rm Id}(N_5)=\{\{0,x\},\{0,y,z\}\}$, $\xi =eq(\{0,x\},\{y,z,1\})$, $\chi =eq(\{0,y,z\},$\linebreak $\{x,1\})$ and $\zeta =eq(\{0\},\{x\},\{y,z\},\{1\})$.

${\rm Spec}_{\rm Filt}(K)=\{\{m,1\}\}$, ${\rm Spec}_{\rm Id}(K)=\{\{0,n,p,q\}\}$ and $\mu =eq(\{m,1\},\{0,n,p,q\})$.

$M_3$, $N_5$ and $K$ are subdirectly irreducible.\label{exspecfiltid}\end{example}

\begin{corollary} Let $(A_i)_{i\in I}$ be a non--empty family of non--trivial bounded lattices and $H=\boxplus _{i\in I}A_i$. Then:\begin{enumerate}
\item\label{3atleast1} $\displaystyle {\rm Con}_{01}(H)=\{\boxplus _{i\in I}\alpha _i\ |\ (\forall \, i\in I)\, (\alpha _i\in {\rm Con}_{01}(A_i))\}\cong \prod _{i\in I}{\rm Con}_{01}(A_i)$;
\item\label{3atleast2} if there exist at least three distinct elements $i,j,k\in I$ with $|A_i|,|A_j|,|A_k|>2$, then ${\rm Spec}_{\rm Filt}(H)={\rm Spec}_{\rm Id}(H)=\emptyset $, $H$ has no two--class congruences, $\displaystyle {\rm Con}(H)={\rm Con}_{01}(H)\cup \{\nabla _H\}\cong (\prod _{i\in I}{\rm Con}_{01}(A_i))\oplus {\cal L}_2$ and we have the following equivalence: $H$ is subdirectly irreducible iff, for some $u\in I$, ${\rm Con}_{01}(A_t)$ has no atoms for all $t\in I\setminus \{u\}$ and ${\rm Con}_{01}(A_u)$ has at most one atom.\end{enumerate}\label{3atleast}\end{corollary}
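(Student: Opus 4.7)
For part (\ref{3atleast1}), the plan is to lift Lemma \ref{cg01hsum} to arbitrary families directly (not by induction, since $I$ may be infinite). I would show that the map $\Phi\colon(\alpha_i)_{i\in I}\mapsto\boxplus_{i\in I}\alpha_i$ is a lattice isomorphism from $\prod_{i\in I}{\rm Con}_{01}(A_i)$ to ${\rm Con}_{01}(H)$. Given $\theta\in{\rm Con}_{01}(H)$, each restriction $\theta_i:=\theta\cap A_i^2$ lies in ${\rm Con}_{01}(A_i)$ because $0/\theta=\{0\}$ and $1/\theta=\{1\}$. The key structural fact is that every $\theta$-class other than $\{0\}$ and $\{1\}$ is contained in some $A_i\setminus\{0,1\}$: were there $x\in A_i\setminus\{0,1\}$ and $y\in A_j\setminus\{0,1\}$ in the same class with $i\neq j$, we would have $(x,1)=(x,x\vee y)\in\theta$, forcing the class of $x$ to coincide with $\{1\}$. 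Hence $\theta=\boxplus_{i\in I}\theta_i$, giving the inverse of $\Phi$; injectivity and order-preservation are immediate from the class-by-class description of $\boxplus_{i\in I}\alpha_i$.

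For part (\ref{3atleast2}), I would first show ${\rm Spec}_{\rm Filt}(H)=\emptyset$. Any proper filter $P$ of $H$ is contained in some $A_i$: if $x\in A_i\setminus\{0,1\}$ and $y\in A_j\setminus\{0,1\}$ with $i\neq j$ both belonged to $P$, then $0=x\wedge y\in P$. The hypothesis then yields two big indices $j,k$ distinct from $i$, and picking $b\in A_j\setminus\{0,1\}$, $b'\in A_k\setminus\{0,1\}$ gives $b\vee b'=1\in P$ with $b,b'\notin P$, so $P$ is not prime. Ideals are handled dually, and the absence of two-class congruences follows from Remark \ref{prime}.

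To establish ${\rm Con}(H)\subseteq{\rm Con}_{01}(H)\cup\{\nabla_H\}$, I would take $\theta\in{\rm Con}(H)\setminus{\rm Con}_{01}(H)$ with $\theta\neq\nabla_H$, and without loss of generality pick $a\in A_i\setminus\{0,1\}$ with $(0,a)\in\theta$. Using a big index $j\neq i$ and $b\in A_j\setminus\{0,1\}$ gives $(b,1)=(0\vee b,a\vee b)\in\theta$; using a third big index $k\notin\{i,j\}$ and $c\in A_k\setminus\{0,1\}$ gives $(0,c)=(c\wedge b,c\wedge 1)\in\theta$. Repeating the first step with $c$ in the role of $a$ and $A_i$ in the role of $A_j$ yields $(d,1)\in\theta$ for every $d\in A_i\setminus\{0,1\}$; taking $d=a$ and combining with $(0,a)\in\theta$ forces $\theta=\nabla_H$, a contradiction. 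The isomorphism ${\rm Con}(H)\cong(\prod_{i\in I}{\rm Con}_{01}(A_i))\oplus{\cal L}_2$ then combines part (\ref{3atleast1}) with the fact that $\nabla_H$ sits above every element of the bounded lattice ${\rm Con}_{01}(H)$.

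For subdirect irreducibility, I would use that $H$ is subdirectly irreducible iff ${\rm Con}(H)$ has a unique atom. In the ordinal sum above, an atom is either $\nabla_H$ (when the product is trivial) or an atom of the product, and atoms of a product of bounded lattices are exactly tuples with one coordinate an atom of its factor and every other coordinate the bottom. Consequently the product has at most one atom iff there exists $u\in I$ with ${\rm Con}_{01}(A_u)$ having at most one atom and ${\rm Con}_{01}(A_t)$ having no atoms for $t\neq u$; both the trivial-product and unique-atom-product cases are subsumed. The main obstacle I anticipate is the propagation argument in part (\ref{3atleast2}): the three-distinct-big-indices hypothesis is used crucially to supply the distinct $j,k$ in the forward step and to ensure a further big index (namely $i$) is available for the return step, so I would verify carefully that all necessary index choices remain available throughout.
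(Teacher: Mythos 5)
Your part (\ref{3atleast1}) is correct and is exactly the ``analogous argument'' the paper has in mind when it reduces this item to Lemma \ref{cg01hsum}; you simply write the argument out for an arbitrary index set, which is fine. For the first three claims of part (\ref{3atleast2}) your route is genuinely different from the paper's: the paper does not redo any congruence propagation, it regroups the family as a binary horizontal sum $H=(A_i\boxplus A_j)\boxplus \bigl(\boxplus _{t\in I\setminus \{i,j\}}A_t\bigr)$, observes that $0$ is meet--reducible and $1$ is join--reducible in the summand $A_i\boxplus A_j$ and that the other summand has more than two elements (it contains $A_k$), and then quotes Theorem \ref{cghsum}.(\ref{cghsum1}) verbatim. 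Your direct arguments (every proper filter lies inside one summand, hence cannot be prime when two further ``big'' summands are available; the $(0,a)\in \theta \Rightarrow (b,1)\in \theta \Rightarrow (0,c)\in \theta \Rightarrow (a,1)\in \theta $ propagation through three big indices) are correct and self--contained, and your index bookkeeping does work out; the paper's regrouping is shorter and reuses the binary theorem, while yours avoids relying on associativity of $\boxplus $. Up to this point the proposal is sound.

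The genuine gap is in the subdirect irreducibility step. The criterion you invoke, ``$H$ is subdirectly irreducible iff ${\rm Con}(H)$ has a unique atom,'' is not valid: subdirect irreducibility means that ${\rm Con}(H)\setminus \{\Delta _H\}$ has a minimum (a monolith), equivalently that $\Delta _H$ is completely meet--irreducible; a unique atom is necessary but not sufficient, because the lattices ${\rm Con}_{01}(A_i)$ need not be atomic. For instance, if $A_i={\cal L}_2\oplus C\oplus {\cal L}_2$ with $C$ a dense chain, then ${\rm Con}_{01}(A_i)$ is nontrivial and atomless, so the meet of its nontrivial elements is its bottom; consequently, counting atoms of $\prod _{i\in I}{\rm Con}_{01}(A_i)$ cannot detect whether the nontrivial congruences of $H$ intersect to $\Delta _H$ (with two such summands the product has no atoms at all, yet congruences supported on different coordinates meet in $\Delta _H$, so $H$ is not subdirectly irreducible). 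What your ordinal--sum-and-product analysis should deliver instead is: $H$ is subdirectly irreducible iff at most one factor ${\rm Con}_{01}(A_u)$ is nontrivial and that factor either is trivial (then $\nabla _H$ is the monolith) or has a least element strictly above $\Delta _{A_u}$; the atom--phrased condition in the enunciation coincides with this only under an atomicity assumption on the ${\rm Con}_{01}(A_i)$, and the paper's proof offers no argument for this clause that you could fall back on. So this final step needs to be reworked around the monolith condition rather than atom counting.
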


\begin{proof} (\ref{3atleast1}) By an analogous argument to that of Lemma \ref{cg01hsum}.

\noindent (\ref{3atleast2}) By (\ref{3atleast1}), Theorem \ref{cghsum}.(\ref{cghsum1}) and the fact that $0$ is meet--reducible and $1$ is join--reducible in $A_i\boxplus A_j$ and $|\boxplus _{t\in I\setminus \{i,j\}}A_t|>2$.\end{proof}

\section{Using Multiple Horizontal Sums to Cancel All But the First and the Last Congruence, and the Application}
\label{constrthelat}

For the following to hold, we do not need to enforce the Continuum Hypothesis. Let us see a stronger construction than the horizontal sum of a bounded lattice $L$ with another bounded lattice, construction that always turns $L$ into a simple bounded lattice.

\begin{remark} Let $(L,\leq )$ be a lattice and $[a,b]_L$ be an interval of $L$ with $|[a,b]_L|>2$, which means that $a,b\in L$ are such that $a<b$ and $a\nprec b$, and let $(M_{a,b},\leq _{a,b},0_{a,b},1_{a,b})$ be a bounded lattice with $|M_{a,b}|>2$. Denote by $N$ the lattice obtained from $L$ by replacing $[a,b]_L$ with $[a,b]_L\boxplus M_{a,b}$, that is: $N=(L\amalg (M_{a,b}\setminus \{0_{a,b},1_{a,b}\}),\leq \cup \leq _{a,b}\cup \{(x,u),(u,y)\ |\ u\in M_{a,b},x\in (a]_L,y\in [b)_L\})$.

Since $[a,b]_N=[a,b]_L\boxplus M_{a,b}$ is a sublattice of $N$, for any $\theta \in {\rm Con}(N)$, we have $\theta \cap ([a,b]_N)^2\in {\rm Con}([a,b]_N)={\rm Con}([a,b]_L\boxplus M_{a,b})$, which fulfills the properties in Section \ref{horizsums}.

${\rm Filt}(N)=\{F\in {\rm Filt}(L)\ |\ a\notin F\}\cup \{F\amalg (M_{a,b}\setminus \{0_{a,b},1_{a,b}\})\ |\ F\in {\rm Filt}(L),a\in F\}\cup \{[b)_L\amalg (G\setminus \{0_{a,b},1_{a,b}\})\ |\ G\in {\rm Filt}(M_{a,b})\}$, hence $|{\rm Filt}(N)|=|{\rm Filt}(L)|+|{\rm Filt}(M_{a,b})\setminus \{\{1_{a,b}\},M_{a,b}\}|=|{\rm Filt}(L)|+|{\rm Filt}(M_{a,b})|-2$.

Similarly, ${\rm Id}(N)=\{I\in {\rm Id}(L)\ |\ b\notin I\}\cup \{I\,\amalg (M_{a,b}\setminus \{0_{a,b},1_{a,b}\})\ |\ I\in {\rm Id}(L),b\in I\}\cup \{(a]_L\amalg (J\setminus \{0_{a,b},1_{a,b}\})\ |\ J\in {\rm Id}(M_{a,b})\}$, hence $|{\rm Id}(N)|=|{\rm Id}(L)|+|{\rm Id}(M_{a,b})\setminus \{\{0_{a,b}\},M_{a,b}\}|=|{\rm Id}(L)|+|{\rm Id}(M_{a,b})|-2$.
\label{hsuminterval}\end{remark}

Throughout the rest of this section, $(L,\leq ,0,1)$ shall be a non--trivial bounded lattice. Let us apply the construction in Remark \ref{hsuminterval} to all intervals of $L$ having cardinality at least $3$, with $M_{a,b}$ replaced with ${\cal L}_2^2$. So let us denote by $D(L)$ the bounded lattice obtained from $L$ in the following way: replace each interval $I$ of $L$ with $|I|>2$ by $I\boxplus {\cal L}_2^2$. In detail, the construction of $D(L)$ can be written like this: consider a bijection from the set $\{[a,b]_L\ |\ a,b\in L,a<b,a\nprec b\}$ of the intervals of $L$ having at least three elements to a set $M$ of pairwise disjoint two--element sets, which associates to each such interval $[a,b]_L$ a two--element set $\{l_{a,b},r_{a,b}\}\in L$. Let $D(L)=(L\amalg M=L\amalg \{l_{a,b},r_{a,b}\ |\ a,b\in L,a<b,a\nprec b\},\leq \cup \Delta _M\cup \{(x,l_{a,b}),(l_{a,b},y),(x,r_{a,b}),(r_{a,b},y)\ |\ a,b\in L,a<b,a\nprec b,x\in (a]_L\cup \{l_{u,v},r_{u,v}\ |\ u,v\in L,u<v\leq a,u\nprec v\},y\in [b)_L\cup \{c_{u,v}\ |\ u,v\in L,b\leq u<v,u\nprec v\}\},0,1)$. We shall denote the order of $D(L)$ by $\leq $, as well.

\begin{example} Here is the construction above applied to the lattices ${\cal L}_3=\{0,m,1\}$, ${\cal L}_4=\{0,a,b,1\}$ and ${\cal L}_2^2\oplus {\cal L}_2=\{0,x,y,z,1\}$:\vspace*{5pt}

\begin{center}
\begin{tabular}{ccc}
\begin{picture}(80,100)(0,0)
\put(38,15){$0$}
\put(40,25){\circle*{3}}
\put(40,45){\circle*{3}}
\put(60,45){\circle*{3}}
\put(20,45){\circle*{3}}
\put(60,45){\line(-1,1){20}}
\put(60,45){\line(-1,-1){20}}
\put(20,45){\line(1,1){20}}
\put(20,45){\line(1,-1){20}}
\put(40,65){\circle*{3}}
\put(43,43){$m$}
\put(5,42){$l_{0,1}$}
\put(63,42){$r_{0,1}$}
\put(40,25){\line(0,1){40}}
\put(38,68){$1$}
\put(13,3){$D({\cal L}_3)\cong M_3$}
\end{picture}
&\hspace*{20pt}
\begin{picture}(80,100)(0,0)
\put(38,15){$0$}
\put(40,25){\circle*{3}}
\put(40,45){\circle*{3}}
\put(60,45){\circle*{3}}
\put(40,85){\circle*{3}}
\put(60,45){\line(-1,1){20}}
\put(60,45){\line(-1,-1){20}}
\put(20,45){\line(1,1){20}}
\put(20,45){\line(1,-1){20}}
\put(40,65){\circle*{3}}
\put(20,45){\circle*{3}}
\put(20,65){\circle*{3}}
\put(60,65){\circle*{3}}
\put(20,65){\line(1,1){20}}
\put(20,65){\line(1,-1){20}}
\put(60,65){\line(-1,1){20}}
\put(60,65){\line(-1,-1){20}}
\put(0,55){\circle*{3}}
\put(0,55){\line(4,3){40}}
\put(0,55){\line(4,-3){40}}
\put(80,55){\line(-4,3){40}}
\put(80,55){\line(-4,-3){40}}
\put(80,55){\circle*{3}}
\put(43,43){$a$}
\put(33,63){$b$}
\put(-15,53){$l_{0,1}$}
\put(83,52){$r_{0,1}$}
\put(12,56){$l_{a,1}$}
\put(56,50){$r_{0,b}$}
\put(25,42){$l_{0,b}$}
\put(43,64){$r_{a,1}$}
\put(40,25){\line(0,1){60}}
\put(38,88){$1$}
\put(26,3){$D({\cal L}_4)$}
\end{picture}
&\hspace*{70pt}
\begin{picture}(80,100)(0,0)
\put(38,15){$0$}
\put(40,25){\circle*{3}}
\put(40,115){\circle*{3}}
\put(-35,70){\circle*{3}}
\put(115,70){\circle*{3}}
\put(-50,68){$l_{0,1}$}
\put(118,68){$r_{0,1}$}
\put(-35,70){\line(5,3){75}}
\put(-35,70){\line(5,-3){75}}
\put(115,70){\line(-5,3){75}}
\put(115,70){\line(-5,-3){75}}
\put(10,55){\circle*{3}}
\put(70,55){\circle*{3}}
\put(10,55){\line(0,1){30}}
\put(70,55){\line(0,1){30}}
\put(10,85){\circle*{3}}
\put(70,85){\circle*{3}}
\put(40,85){\circle*{3}}
\put(40,85){\line(0,1){30}}
\put(40,115){\line(1,-2){30}}
\put(40,115){\line(-1,-2){30}}
\put(40,25){\line(-1,1){30}}
\put(40,25){\line(1,1){30}}
\put(40,85){\line(-1,-1){30}}
\put(40,85){\line(1,-1){30}}
\put(3,52){$x$}
\put(30,55){\circle*{3}}
\put(30,55){\line(1,3){10}}
\put(30,55){\line(1,-3){10}}
\put(50,55){\line(-1,3){10}}
\put(50,55){\line(-1,-3){10}}
\put(50,55){\circle*{3}}
\put(25,85){\circle*{3}}
\put(55,85){\circle*{3}}
\put(-4,80){$l_{x,1}$}
\put(13,80){$r_{x,1}$}
\put(16,51){$l_{0,z}$}
\put(34,52){$r_{0,z}$}
\put(72,53){$y$}
\put(73,82){$r_{y,1}$}
\put(57,80){$l_{y,1}$}
\put(43,84){$z$}
\put(38,118){$1$}
\put(40,115){\line(-1,-1){30}}
\put(40,115){\line(1,-1){30}}
\put(13,3){$D({\cal L}_2^2\oplus {\cal L}_2)$}
\end{picture}\end{tabular}\end{center}\vspace*{-15pt}
\end{example}

\begin{remark} By Remark \ref{hsuminterval}:\begin{itemize}
\item $|{\rm Filt}(D(L))|=|{\rm Filt}(L)|+(|{\rm Filt}({\cal L}_2^2)|-2)\cdot |\{(a,b)\ |\ a,b\in L,a<b,a\nprec b\}|=|{\rm Filt}(L)|+2\cdot |\{(a,b)\ |\ a,b\in L,a<b,a\nprec b\}|$;
\item $|{\rm Id}(D(L))|=|{\rm Id}(L)|+(|{\rm Id}({\cal L}_2^2)|-2)\cdot |\{(a,b)\ |\ a,b\in L,a<b,a\nprec b\}|=|{\rm Id}(L)|+2\cdot |\{(a,b)\ |\ a,b\in L,a<b,a\nprec b\}|$.\end{itemize}\label{filtiddl}\end{remark}

\begin{theorem} The lattice $D(L)$ is simple.\label{cgdl}\end{theorem}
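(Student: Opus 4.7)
The plan is to show that any $\theta \in {\rm Con}(D(L))$ with $\theta \neq \Delta _{D(L)}$ must satisfy $(0,1) \in \theta$, whence $\theta = \nabla _{D(L)}$. If $|L|=2$ then $L={\cal L}_2$ has no interval of cardinality exceeding $2$, so no new elements are added in the construction and $D(L)=L$ is trivially simple. Assume henceforth $|L| \geq 3$, so $0 \nprec 1$ in $L$ (hence $l_{0,1}, r_{0,1}$ belong to $D(L)$) and $L \setminus \{0,1\} \neq \emptyset$.

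The key structural fact is that $l_{0,1}$ and $r_{0,1}$ are incomparable in $D(L)$ with every element outside $\{0,1\}$: by Remark~\ref{hsuminterval} and the description of the order on $D(L)$, the only strict lower bound of $l_{0,1}$ is $0$ and the only strict upper bound is $1$. Consequently $l_{0,1} \wedge w = 0$ and $l_{0,1} \vee w = 1$ for every $w \in D(L) \setminus \{0, 1, l_{0,1}\}$, and likewise for $r_{0,1}$.

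Pick $(x,y) \in \theta$ with $x < y$. The central task is to derive $(l_{0,1}, 1) \in \theta$ (the symmetric conclusion $(0, r_{0,1}) \in \theta$ would serve equally well). When $x = 0$ and $y \in D(L) \setminus \{0, 1, l_{0,1}\}$, joining $(x,y)$ with $l_{0,1}$ yields $(l_{0,1}, 1) \in \theta$ at once; the sub-cases $y \in \{1, l_{0,1}\}$ are immediate, and the dual situation $y = 1$, $0<x<1$, is handled analogously by meeting with $l_{0,1}$. The delicate case is $0 < x < y < 1$ with $x, y \notin \{l_{0,1}, r_{0,1}\}$ (which is automatic from the comparability of the top diamond): if both $x, y \in L$, then $0 \nprec y$ in $L$ (because $x$ lies strictly between), so $l_{0,y} \in D(L)$, and $(x \wedge l_{0,y}, y \wedge l_{0,y}) = (0, l_{0,y}) \in \theta$ since $x \in L \setminus \{0\}$ is incomparable with $l_{0,y}$; joining with $l_{0,1}$ gives $(l_{0,1}, 1) \in \theta$. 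If instead one of $x,y$ is a new element $l_{a,b}$ or $r_{a,b}$ with $(a,b) \neq (0,1)$, I use the paired new element to reduce: from $(x, l_{a,b}) \in \theta$ with $x \leq a$ in $D(L)$, meeting with $r_{a,b}$ gives $(x, a) \in \theta$ while joining gives $(r_{a,b}, b) \in \theta$, returning us to a pair of $L$-elements or to a boundary pair already covered.

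Once $(l_{0,1}, 1) \in \theta$ is secured, meeting with any $z \in L \setminus \{0,1\}$ gives $(0, z) \in \theta$; joining this with $r_{0,1}$ gives $(r_{0,1}, 1) \in \theta$, so $(l_{0,1}, r_{0,1}) \in \theta$ by transitivity; meeting the latter with $l_{0,1}$ yields $(l_{0,1}, 0) \in \theta$, and one more transitive step gives $(0, 1) \in \theta$, so $\theta = \nabla _{D(L)}$. The main obstacle is the interior case analysis; $D(L)$ is designed precisely so that the diamond partners $(l_{a,b}, r_{a,b})$ attached to every non-cover interval supply non-trivial meets and joins that reduce any related pair to a boundary pair, after which the top-level diamond $\{0, l_{0,1}, r_{0,1}, 1\}$ finishes the argument.
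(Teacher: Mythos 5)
Your reduction in the interior new--element case has a genuine gap. You claim that from a pair $(x,l_{a,b})\in \theta $ with $x\leq a$, meeting and joining with the partner $r_{a,b}$ ``returns us to a pair of $L$--elements or to a boundary pair already covered''. That is false precisely in the critical configuration $x=a$ with $0<a<b<1$, i.e.\ when the only pair you know is $(a,l_{a,b})$ (or symmetrically $(a,r_{a,b})$, $(l_{a,b},b)$, $(r_{a,b},b)$): meeting with $r_{a,b}$ gives the trivial pair $(a,a)$, joining gives $(r_{a,b},b)$, and applying the same recipe to $(r_{a,b},b)$ with the partner $l_{a,b}$ gives back $(a,l_{a,b})$ and the trivial $(b,b)$. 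So the reduction cycles and never produces a nontrivial pair of $L$--elements or a boundary pair, and hence never reaches $(0,1)$. The obstruction is structural: all your moves in this sub--case stay inside the sublattice $\{a,l_{a,b},r_{a,b},b\}\cong {\cal L}_2^2$, which is \emph{not} simple (the partition $\{a,l_{a,b}\},\{r_{a,b},b\}$ is a congruence of it), so no sequence of meets and joins with $l_{a,b}$ and $r_{a,b}$ alone can force $(a,b)\in \theta $.

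The missing idea is exactly the one the paper's proof is built on: since $a\nprec b$ in $L$, there is a $c\in [a,b]_L\setminus \{a,b\}$, and $\{a,l_{a,b},c,r_{a,b},b\}\cong M_3$ is a sublattice of $D(L)$; because $M_3$ is simple, any collapse inside it propagates to $(a,b)\in \theta $ (concretely, from $(r_{a,b},b)\in \theta $ meet with $c$ to get $(a,c)\in \theta $, a nontrivial pair of $L$--elements, which your earlier case then handles). With that repair your argument does close: your treatment of the boundary pairs and of pairs of $L$--elements, via $l_{0,y}$ and the top diamond $\{0,l_{0,1},r_{0,1},1\}$, is correct and is a somewhat more direct route than the paper's chain of $M_3$'s through $l_{0,z}$ and $l_{y,1}$ (the paper also uses convexity of congruence classes where you use explicit meets, which is only a cosmetic difference). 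But as written, the new--element case is not proved.
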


\begin{proof} If $|L|=2$, then $D(L)=L$, so ${\rm Con}(D(L))={\rm Con}(L)=\{\Delta _L,\nabla _L\}$, and, of course, $\Delta _L\neq \nabla _L$, since $|L|>1$.

Now assume that $|L|>2$, and let $\theta \in {\rm Con}(D(L))$ such that $\theta \neq \Delta _{D(L)}$, so that, for some $x\in L$, $|x/\theta |\geq 2$, thus there exist $u,v\in x/\theta $ with $u\neq v$. Denote $y=u\wedge v\in x/\theta $ and $z=u\vee v\in x/\theta $, so that $y<z$ and $(y,z)\in \theta $. Let us analyse the following cases, of $y$ and $z$ belonging to $L$ or to $D(L)\setminus L=\{l_{a,b},r_{a,b}\ |\ a,b\in L,a<b,a\nprec b\}$.

{\bf Case 1:} $y,z\in L$. If $y=0$ and $z=1$, then $(0,1)\in \theta $, thus $\theta =\nabla _L$. If $y=0$ and $z\neq 1$, then $S=\{0,l_{0,1},z,r_{0,1},1\}\cong M_3$ is a sublattice of $L$, thus $\theta \cap S^2\in {\rm Con}(S)$, so $\theta \cap S^2=\nabla _S$ since $(0,z)\in \theta \cap S^2$, hence $(0,1)\in \theta \cap S^2\subseteq \theta $, therefore $\theta =\nabla _{D(L)}$. Analogously, if $y\neq 0$ and $z=1$, then $\theta =\nabla _L$. Finally, if $y\neq 0$ and $z\neq 1$, then $T=\{0,l_{0,z},y,r_{0,z},z\}\cong M_3$ and $U=\{y,l_{y,1},z,r_{y,1},1\}\cong M_3$ are sublattices of $L$, thus $\theta \cap T^2\in {\rm Con}(T)$ and $\theta \cap U^2\in {\rm Con}(U)$, so $\theta \cap T^2=\nabla _T$ and $\theta \cap U^2=\nabla _U$ since $(y,z)\in \theta \cap T^2\cap U^2$, therefore $(0,y)\in \theta \cap T^2$ and $(z,1)\in \theta \cap U^2$, hence $(0,y),(y,z),(z,1)\in \theta $, thus $(0,1)\in \theta $, therefore $\theta =\nabla _{D(L)}$.

{\bf Case 2:} $y\in L$ and $z\in D(L)\setminus L$, say, for instance, $z=l_{a,b}$ for some $a,b\in L$ with $a<b$ and $a\nprec b$, so that there exists a $c\in [a,b]_L\setminus \{a,b\}$. Then $y\leq a<z$, thus, since the subset $y/\theta =z/\theta $ of $D(L)$ is convex, it follows that $(a,z)\in \theta $. Also, $V=\{a,z=l_{a,b},c,r_{a,b},b\}\cong M_3$ is a sublattice of $D(L)$, thus $\theta \cap V^2\in {\rm Con}(V)$, so $\theta \cap V^2=\nabla _V$ since $(a,z)\in \theta \cap V^2$, hence $(a,b)\in \theta \cap V^2\subseteq \theta $, therefore $\theta =\nabla _{D(L)}$ by {\bf case 1}.

{\bf Case 3:} $y\in D(L)\setminus L$ and $z\in L$, say, for instance, $y=l_{a,b}$ for some $a,b\in L$ with $a<b$ and $a\nprec b$, so that there exists a $c\in [a,b]_L\setminus \{a,b\}$. Then $y<b\leq z$, thus, since the subset $y/\theta =z/\theta $ of $D(L)$ is convex, it follows that $(y,b)\in \theta $. Also, $W=\{a,y=l_{a,b},c,r_{a,b},b\}\cong M_3$ is a sublattice of $D(L)$, thus $\theta \cap W^2\in {\rm Con}(W)$, so $\theta \cap W^2=\nabla _W$ since $(y,b)\in \theta \cap W^2$, hence $(a,b)\in \theta \cap W^2\subseteq \theta $, therefore $\theta =\nabla _{D(L)}$ by {\bf case 1}.

{\bf Case 4:} $y,z\in D(L)\setminus L$, say $y=l_{a,b}$ and $z=l_{c,d}$ for some $a,b,c,d\in L$ with $a<b$, $c<d$, $a\nprec b$ and $c\nprec d$, so that $a<y<b\leq c<z<d$, thus $(c,z)\in \theta $ since the subset $y/\theta =z/\theta $ of $D(L)$ is convex, therefore $\theta =\nabla _{D(L)}$ by {\bf case 2}.

Hence ${\rm Con}(D(L))=\{\Delta _{D(L)},\nabla _{D(L)}\}$. Of course, $\Delta _{D(L)}\neq \nabla _{D(L)}$, since $|D(L)|\geq |L|>2$.\end{proof}

\begin{example} Let us consider the bounded lattice ${\cal N}=(\N ,\lcm ,\gcd ,|,1,0)$, which is complete and completely distributive, which can be easily shown by using the complete distributivity of the chain $(\N ,\leq )$ and the prime decompositions of the natural numbers. The distributivity of ${\cal N}$  ensures us that $|{\rm Con}({\cal N})|\geq \max \{|{\rm Filt}({\cal N})|,|{\rm Id}({\cal N})|\}$. As shown in \cite[Example $4.4$]{eucard}, $|{\rm Filt}({\cal N})|=|\N |=\aleph _0$ and $|{\rm Id}({\cal N})|=|{\cal P}(\N )|=2^{\aleph _0}>\aleph _0=|{\rm Filt}({\cal N})|$. Indeed, ${\rm Filt}({\cal N})={\rm PFilt}({\cal N})$, because $\N =[0)_{\cal N}$, $\{1\}=[1)_{\cal N}$ and, if we denote, for any $n\in \N ^*$ and any $p\in \P $, by $e_p(n)=\max \{k\in \N \ |\ p^k\, |\, n\}$ and we take an $F\in {\rm Filt}({\cal N})\setminus \{\{1\},\N \}$, then, by the well ordering of $(\N ,\leq )$, there exists $\displaystyle \bigwedge (F)=\prod _{p\in \P }p^{\min \{e_p(n)\ |\ n\in F\}}\in \N ^*$, so that there are only finitely many $p\in P$ with $\min \{e_p(n)\ |\ n\in F\}\neq 0$, say $p_1<p_2<\ldots <p_k$, for some $k\in \N $, are such that $\{p_1,p_2,\ldots ,p_k\}=\{p\in P\ |\ \min \{e_p(n)\ |\ n\in F\}\neq 0\}$, and there exist $n_1,n_2,\ldots ,n_k\in F$, not necessarily distinct, such that, for all $i\in [1,k]$, $e_{p_i}(n_i)=\min \{e_{p_i}(n)\ |\ n\in F\}$, hence $\displaystyle \bigwedge (F)=\lcm \{n_1,n_2,\ldots ,n_k\}\in F$, thus $\displaystyle \bigwedge (F)=\min (F)$, hence $F=[\min (F))_{\cal N}\in {\rm PFilt}({\cal N})$. The argument for ${\rm Filt}({\cal N})={\rm PFilt}({\cal N})$ in \cite{eucard} was shorter, but this one is more natural. Now, for any $P\subseteq \P $, $(P]_{\cal N}=\{n\in \N \ |\ (\exists \, k\in \N ^*)\, (\exists \, p_1,p_2,\ldots ,p_k\in P)\, (n\, |\, \lcm \{p_1,p_2,\ldots ,p_k\}=p_1\cdot p_2\cdot \ldots \cdot p_k)\}=\{n\in \N \ |\ (\exists \, k\in \N ^*)\, (\exists \, p_1,p_2,\ldots ,p_k\in P)\, (\exists \, e_1,e_2,\ldots ,e_k\in \N ^*)\, (n=p_1^{e_1}\cdot p_2^{e_2}\cdot \ldots \cdot p_k^{e_k})\}$, thus $(P]_{\cal N}\notin {\rm PId}({\cal N})$ if $|P|=\aleph _0$, and, for any $P,Q\subseteq \P $ with $P\neq Q$, $(P]_{\cal N}\neq (Q]_{\cal N}$, hence $|{\rm Id}({\cal N})|\geq |{\cal P}(\P )|=|{\cal P}(\N )|$, thus $|{\rm Id}({\cal N})|=|{\cal P}(\N )|=2^{\aleph _0}$. Hence $2^{\aleph _0}\leq |{\rm Con}({\cal N})|\leq 2^{\aleph _0}$, therefore $|{\rm Con}({\cal N})|=2^{\aleph _0}$.

If we denote by $H={\cal N}\boxplus {\cal L}_2^2$, then, according to Theorem \ref{cghsum}, since $0$ is meet--reducible and $1$ is join--reducible in ${\cal L}_2^2$, ${\rm Con}(H)={\rm Con}_{01}(H)\cup \{\nabla _H\}\cong ({\rm Con}_{01}({\cal N})\times {\rm Con}_{01}({\cal L}_2^2))\oplus {\cal L}_2=({\rm Con}_{01}({\cal N})\times \{\Delta _{{\cal L}_2^2}\})\oplus {\cal L}_2\cong {\rm Con}_{01}({\cal N})\oplus {\cal L}_2$. Unfortunately, $|{\rm Con}_{01}({\cal N})|=|{\rm Con}({\cal N})|=2^{\aleph _0}$. Indeed, let $\displaystyle \mu =eq(\{\{1\},\{0\}\}\cup \{\{\prod _{p\in P}p^{n_p}\ |\ (\forall \, p\in P)\, (n_p\in \N ^*)\}\ |\ P\subset \P ,|P|<\aleph _0\})$. It is immediate that $\mu $ is a congruence of ${\cal N}$. Let us prove that ${\rm Con}_{01}({\cal N})=(\mu ]_{{\rm Con}({\cal N})}$. For each $n\in \N $, let us denote by $P_n=\{p\in \P \ |\ p|n\}$, so that $P_1=\emptyset $, $P_0=\P $, for all $a,b\in \N $, $P_{\gcd \{a,b\}}=P_a\cap P_b$ and $P_{\lcm \{a,b\}}=P_a\cup P_b$, and $\mu =\{(u,v)\ |\ u,v\in \N ,P_u=P_v\}$. Let $\theta \in {\rm Con}({\cal N})$ such that $1/\theta =\{1\}$. Let $x,y\in \N ^*$, such that $(x,y)\in \theta $ and assume by absurdum that $P_x\neq P_y$, say $P_y\setminus P_x\neq \emptyset $. Since $x,y\in \N ^*$, $P_x$ and $P_y$ are finite non--empty subsets of $\P $. Let $z\in \N $ such that $P_z=P_y\setminus P_x$, thus $P_{\gcd \{x,z\}}=P_x\cap P_z=P_x\cap (P_y\setminus P_x)=\emptyset $ and $P_{\gcd \{y,z\}}=P_y\cap P_z=P_y\cap (P_y\setminus P_x)=P_y\setminus P_x\neq \emptyset $, hence $\gcd \{x,z\}=1$ and $\gcd \{y,z\}\neq 1$, but $\gcd \{y,z\}/\theta =\gcd \{x,z\}/\theta =1/\theta $, which is a contradiction to $1/\theta =\{1\}$. Therefore $P_x=P_y$, so $(x,y)\in \mu $, hence $\theta \subseteq \mu $. Since $1/\mu =\{1\}$, it follows that $\mu =\max \{\alpha \in {\rm Con}({\cal N})\ |\ 1/\alpha =\{1\}\}$. But we also have $0/\mu =\{0\}$, hence $\mu =\max ({\rm Con}_{01}({\cal N}))$ and thus $\{\alpha \in {\rm Con}({\cal N})\ |\ 1/\alpha =\{1\}\}={\rm Con}_{01}({\cal N})=(\mu ]_{{\rm Con}({\cal N})}$. For instance, given any $n\in \N ^*$ and any $P\subseteq \P $ with $|P|<\aleph _0$, $\displaystyle \theta _{n,P}=eq(\{\{1\},\{0\}\}\cup \{\{\prod _{p\in Q}p^{n_p}\ |\ (\forall \, p\in Q)\, (n_p\in \N ^*)\}\ |\ Q\subset \P ,|Q|<\aleph _0,Q\neq P\}\cup \{\{\prod _{p\in P}p^{n_p}\ |\ (\forall \, p\in Q)\, (n_p\in [1,n])\},\{\prod _{p\in Q}p^{n_p}\}\ |\ Q\subset \P ,|Q|<\aleph _0,Q\neq P,(\forall \, p\in Q)\, (n_p\in \N ^*)\})\in (\mu ]_{{\rm Con}({\cal N})}$. Therefore $|{\rm Con}(H)|=|{\rm Con}_{01}(H)|=|{\rm Con}_{01}({\cal N})|=|(\mu ]_{{\rm Con}({\cal N})}|\geq |\{\theta _{n,P}\ |\ P\subseteq \P ,|P|<\aleph _0\}|=|\{P\subseteq \P \ |\ |P|<\aleph _0\}|=2^{\aleph _0}$, hence $|{\rm Con}(H)|=2^{\aleph _0}$.

So the construction of the horizontal sum $H={\cal N}\boxplus {\cal L}_2^2$ does not cancel enough congruences of ${\cal N}$. We need the stronger construction $D({\cal N})$ introduced above.

By Theorem \ref{cgdl}, the lattice $D({\cal N})$ is simple. Clearly, the intervals of ${\cal N}$ having alt least three elements are $[n,0]_{\cal N}$ and $[n,kn]_{\cal N}$, with $n\in \N ^*$ and $k\in \N $, $k\geq 2$. Let us denote the set of these intervals by $\I _{\cal N}$. We can write $\I _{\cal N}$ in this way: $\I _{\cal N}=\{[n,kn]_{\cal N}\ |\ n\in \N ^*,k\in \N \setminus \{1\}\}$, so $\I _{\cal N}$ is in bijection to $\N ^*\times (\N \setminus \{1\})$, thus $|\I _{\cal N}|=|\N ^*|\cdot |\N \setminus \{1\}|=\aleph _0\cdot \aleph _0=\aleph _0$. By Remark \ref{filtiddl}, it follows that $|{\rm Filt}(D({\cal N}))|=|{\rm Filt}({\cal N})|+2\cdot |\I _{\cal N}|=\aleph _0+\aleph _0=\aleph _0$ and $|{\rm Id}(D({\cal N}))|=|{\rm Id}({\cal N})|+2\cdot |\I _{\cal N}|=2^{\aleph _0}+\aleph _0=2^{\aleph _0}$. Therefore $|{\rm Con}(D({\cal N}))|$, $|{\rm Filt}(D({\cal N}))|$ and $|{\rm Id}(D({\cal N}))|$ are pairwise distinct, more precisely $D({\cal N})$ is a simple bounded lattice, ${\rm Filt}(D({\cal N}))$ is countable and ${\rm Id}(D({\cal N}))$ is uncountable.\label{mylattice}\end{example}

\begin{example}[due to G\' abor Cz\' edli] We can modify Example \ref{mylattice} such that the resulting lattice can have all its congruences, excepting the smallest and the greatest, cancelled by the simple construction of its horizontal sum with the four--element Boolean algebra. Moreover, we can let this lattice have any infinite cardinality.

Let us denote by $(p_n)_{n\in \N }$ the sequence of the prime natural numbers, by $\displaystyle (P_{\cal N},\leq )=\prod _{n\in \N }(\N ,\leq )$, where the last $\leq $ is the natural order on $\N $, and by $Q_{\cal N}=(P_{\cal N}\amalg \{{\bf 1}\},\leq \cup \{(x,{\bf 1})\ |\ x\in P_{\cal N}\amalg \{{\bf 1}\}\})$. Clearly, $\displaystyle S_{\cal N}=\{(x_n)_{n\in \N }\in P_{\cal N}\ |\ |\{n\in \N \ |\ x_n\neq 0\}|<\aleph _0\}\amalg \{{\bf 1}\}$ is a bounded sublattice of $Q_{\cal N}$ and $\varphi :S_{\cal N}\rightarrow {\cal N}$, defined by $\varphi ({\bf 1})=0$ and $\displaystyle \varphi ((x_n)_{n\in \N })=\prod _{n\in \N }p_n^{x_n}$ for all $(x_n)_{n\in \N }\in P_{\cal N}$, is a lattice isomorphism between $S_{\cal N}$ and the lattice ${\cal N}$ in Example \ref{mylattice}. Now, if we replace, in the construction above, the chain $(\N ,\leq )$ by $0$--regular lattices with the DCC, then an analogous construction to the above shall produce a bounded lattice whose horizontal sum with ${\cal L}_2^2$ is simple.

Let $\kappa $ be an arbitrary infinite cardinality, $M$ be a set with $|M|=\kappa $ and $(A_i)_{i\in M}$ be a family of lattices with the DCC, hence with smallest elements, having $|A_i|\leq \kappa $  and the property that $\{\alpha \in {\rm Con}(A_i)\ |\ 0^{A_i}/\alpha =\{0^{A_i}\}\}=\Delta _{A_i}$ for all $i\in M$. Consider the lattice $\displaystyle P=\prod _{i\in M}A_i$ with smallest element ${\bf 0}=(0^{A_i})_{i\in M}$, the bounded lattice $Q=(P\amalg \{{\bf 1}\},\leq ^P\cup \{(x,{\bf 1})\ |\ x\in P\amalg \{{\bf 1}\}\})$ and the bounded sublattice $\displaystyle S=\{(x_i)_{i\in M}\in P\ |\ |\{i\in M\ |\ x_i\neq 0\}|<\aleph _0\}\amalg \{{\bf 1}\}$ of $Q$.

If we denote, for all $n\in \N $, by $S_n=\{(x_i)_{i\in M}\in P\ |\ |\{i\in M\ |\ x_i\neq 0^{A_i}\}|=n\}$, then, for all $n\in \N $, since, for all $i\in M$, $|{\cal P}_n(A_i)|\leq |{\cal P}_n(M)|=|M|=\kappa $, it follows that $|S_n|=\kappa $. Therefore, since $\displaystyle S=\bigcup _{n\in \N }S_n$ (and the $S_n$ are pairwise disjoint), we have $|S|=|\N |\cdot \kappa =\aleph _0\cdot \kappa =\kappa $.

Now, for all $T\in {\cal P}_{<\kappa }(M)$, let $S_T=\{(x_i)_{i\in M}\in P\ |\ \{i\in M\ |\ x_i\neq 0^{A_i}\}\subseteq T\}$. Clearly, if $T,U\in {\cal P}_{<\kappa }(M)$ with $T\neq U$, then $S_T\neq S_U$, thus $|\{S_T\ |\ T\in {\cal P}_{<\kappa }(M)\}|=|{\cal P}_{<\kappa }(M)|=|M|^{\kappa }=\kappa ^{\kappa }=2^{\kappa }$. It is immediate that, for all $T\in {\cal P}_{<\kappa }(M)$, $S_T\in {\rm Id}(S)$, hence $2^{\kappa }\leq |{\rm Id}(S)|\leq |{\cal P}(S)|=2^{\kappa }$, hence $|{\rm Id}(S)|=2^{\kappa }>\kappa $.

Now let us prove that all filters of $S$ are principal. Let $F\in {\rm Filt}(S)\setminus \{[{\bf 1})_S\}$, $f=(f_i)_{i\in M}\in P\cap F=F\setminus \{{\bf 1}\}$ and $N_f=\{i\in M\ |\ f_i\neq 0^{A_i}\}$, so that $|N_f|<\aleph _0$ by the definition of $S$. If ${\bf 0}\in F$, then $F=[{\bf 0})_S$. Now assume that ${\bf 0}\notin F$, so that $f\neq {\bf 0}$ and thus $N_f\neq \emptyset $. Let $\displaystyle p_{N_f}:P\rightarrow \prod _{i\in N_f}A_i$ be the canonical projection: $p_{N_f}((x_i)_{i\in M})=(x_j)_{j\in N_f}$ for all $(x_i)_{i\in M}\in P$. It is straightforward that $\displaystyle p_{N_f}\mid _{(f]_S}:(f]_S\rightarrow \prod _{i\in N_f}A_i$ is an injection, hence $(f]_S\cong p_{N_f}((f]_S)$, which is a sublattice of the finite direct product $\displaystyle \prod _{i\in N_f}A_i$, hence it has the DCC. Thus the bounded lattice $(f]_S$ has the DCC, hence the set $\{f\wedge g\ |\ g\in F\}\subseteq (f]_S$ has minimal elements; let $g^*\in F$ such that $f^*=f\wedge g^*\in (f]_S\subseteq F$ is a minimal element of this set. Since $f^*\in F\in {\rm Filt}(S)$, we have $[f^*)_S\subseteq F$. Assume by absurdum that $F\nsubseteq [f^*)_S$, so that there exists an $h\in F$ with $f^*\nleq h$, thus $f^*\neq f^*\wedge h$ and hence $f^*>f^*\wedge h=f\wedge g^*\wedge h$, which contradicts the minimality of $f^*$ since $g^*\wedge h\in F$. Therefore $F\subseteq [f^*)_S$, hence $F=[f^*)_S\in {\rm PFilt}(S)$, thus ${\rm Filt}(S)={\rm PFilt}(S)$ and hence $|{\rm Filt}(S)|=|S|=\kappa $.

For any $\theta \in {\rm Con}(S)$ and any $i\in M$, denote by $pr_i(\theta )=\{(a,b)\in A_i^2\ |\ (\exists \, ((x_i)_{i\in M},(y_i)_{i\in M})\in \theta \cap P^2)\, (x_i=a,y_i=b)\}\in {\rm Con}(A_i)$. Now let $\theta \in {\rm Con}_{01}(S)$. Then, for all $i\in M$, $0^{A_i}/pr_i(\theta )=\{0^{A_i}\}$, so that $pr_i(\theta )=\Delta _{A_i}$. Since $P\cap S=S\setminus \{{\bf 1}\}$ is a sublattice of $S$, $\theta \cap (S\setminus \{{\bf 1}\})^2\in {\rm Con}(S\setminus \{{\bf 1}\})$, and, clearly, $\displaystyle \theta \cap (S\setminus \{{\bf 1}\})^2\subseteq \prod _{i\in M}pr_i(\theta )=\prod _{i\in M}\Delta _{A_i}=\Delta _P$, therefore $\displaystyle \theta \cap (S\setminus \{{\bf 1}\})^2=\Delta _{S\setminus \{{\bf 1}\}}$. But ${\bf 1}/\theta =\{{\bf 1}\}$, hence $x/\theta =\{x\}$ for all $x\in (S\setminus \{{\bf 1}\})\cup \{{\bf 1}\}=S$, that is $\theta =\Delta _S$. Therefore ${\rm Con}_{01}(S)=\{\Delta _S\}$.

If we denote by $H=S\boxplus {\cal L}_2^2$, then, since $0$ is meet--reducible and $1$ is join--reducible in ${\cal L}_2^2$, by Theorem \ref{cghsum} it follows that ${\rm Con}(H)={\rm Con}_{01}(H)\cup \{\nabla _H\}\cong ({\rm Con}_{01}(S)\times {\rm Con}_{01}({\cal L}_2^2))\oplus {\cal L}_2=(\{\Delta _S\}\times \{\Delta _{{\cal L}_2^2}\})\oplus {\cal L}_2\cong {\cal L}_1\oplus {\cal L}_2\cong {\cal L}_2$, so the bounded lattice $H$ is simple. By Remark \ref{filtidhsum}, $|{\rm Filt}(H)|=|{\rm Filt}(S)|=\kappa $ and $|{\rm Id}(H)|=|{\rm Id}(S)|=2^{\kappa }$. Hence $|{\rm Con}(H)|$, $|{\rm Filt}(H)|$ and $|{\rm Id}(H)|$ are pairwise distinct.\label{gcslattice}\end{example}

\section*{Acknowledgements}

This work was supported by the research grant {\em Propriet\`a d`Ordine Nella Semantica Algebrica delle Logiche Non--classiche} of Universit\`a degli Studi di Cagliari, Regione Autonoma della Sardegna, L. R. $7/2007$, n. $7$, $2015$, CUP: ${\rm F}72{\rm F}16002920002$.

I thank Antonio Ledda and Francesco Paoli for useful discussions about the issues addressed in this paper. I also thank G\' abor Cz\' edli for pointing out the error in a previous version of the proof from Example \ref{mylattice}, as well as constructing Example \ref{gcslattice}.

\end{document}